\let\csname equation*\endcsname\relax
\let\csname endequation*\endcsname\relax
\newcommand*\circled[1]{\tikz[baseline=(char.base)]{
            \node[shape=circle,draw,inner sep=2pt] (char) {#1};}}
\newcommand{\R}{{\mathbb R}}
 \def\prox{\mathrm{prox}}
 \def\d{\,\mathrm{d}}
\def\argmin{\mathop{\mathrm{arg min}}}
\def\Tau{\mathscr{T}}
\def\Wt{{W_{\Tau}}}
\def\Vt{{V_{\Tau}}}
\def\pt{{p_{\Tau}}}
\def\Gammat{{\Gamma_{\Tau}}}
\def\nt{{n}}
\def\nM{m}
\def\mp{{m_p}}
\def\sigmaL{{\sigma_\lambda^\delta}}
\def\sigman{{\sigma^{(n)}}}
\def\sigmanp{{\sigma^{(n+1)}}}
\def\Ld{{\Lambda^\delta}}
\newcommand{\upchi}{{\text{\raisebox{1.5pt}{$\chi$}}}}
\providecommand\argmin{\mathop{\mathrm{argmin}}}
\newtheorem{theorem}{Theorem}
\newtheorem{prop}{Proposition}
\newtheorem{cor}{Corollary}
\newtheorem{rem}{Remark}
\begin{document}

\title{Oracle-Net for nonlinear compressed sensing in Electrical Impedance Tomography reconstruction problems}

\date{ }
\author{Damiana Lazzaro\thanks{ ORCID ID 0000-0002-2029-9842. Department of Mathematics, University of Bologna, Bologna, Italy. Email: damiana.lazzaro@unibo.it. }
      \and
    Serena Morigi\thanks{ORCID ID 0000-0001-8334-8798. Department of Mathematics, University of Bologna, Bologna, Italy.
   Email: serena.morigi@unibo.it.}
  \and
    Luca Ratti\thanks{ORCID ID 0000-0001-7948-0577. Department of Mathematics, University of Bologna, Bologna, Italy. 
    Email: luca.ratti5@unibo.it. Corresponding author.}
}

\maketitle

\begin{abstract}
Sparse recovery principles play an important role in solving many nonlinear ill-posed inverse problems. 
We investigate a variational framework with support Oracle for compressed sensing sparse reconstructions, where the available measurements are nonlinear and possibly corrupted by noise.
A graph neural network, named Oracle-Net, is proposed to predict the support from the nonlinear measurements and is integrated into a regularized recovery model to enforce sparsity. The derived nonsmooth optimization problem is then efficiently solved through a constrained proximal gradient method.
Error bounds on the approximate solution of the proposed Oracle-based optimization are provided in the context of the ill-posed Electrical Impedance Tomography problem.   Numerical solutions of the EIT nonlinear inverse reconstruction problem confirm the potential of the proposed method which improves the reconstruction quality from undersampled measurements, under sparsity assumptions. 
\end{abstract}
{\bf Keywords:} nonlinear inverse problems; compressed sensing; Electrical Impedance Tomography; sparsity-inducing regularization; nonsmooth numerical optimization. 
\bigskip

\section{Introduction}

The theory of compressed sensing (CS) is a successful  mathematical technology in sparse signal recovery, established several years ago by Donoho \cite{Donoho2006}, successfully carried out by \cite{CW2008,CRT2006}, 
and widespreadly developed in the last two decades.
The main idea behind CS is to acquire a small number of linear measurements of a signal that exhibits sparsity or compressibility in a known representation system, and accurately reconstruct the original sparse signal using an appropriate reconstruction algorithm, under the assumption that the sensing matrix used in acquisition
satisfies the Restricted Isometry Property (RIP).
The standard recovery algorithms for finding an approximated CS solution are based either on a greedy approach or on variational models, such as $\ell_1$-norm minimization, leading to suitable iterative thresholded gradient descent methods.

A significant portion of research focuses on {\em linear} CS problems and holds for {\em well-posed} compressive sensing models. However many real-world applications in physics and biomedical sciences involve inherent nonlinearities. In these cases, the linear model becomes inadequate.

We are interested in a more general setting that extends the concepts of compressive sensing and sparse recovery to inverse and {\em ill-posed} {\em nonlinear} problems.

In this more general setting, we consider the problem of recovering an unknown vector $\sigma^{\dag} \in \R^n$  from incomplete and contaminated nonlinear measurements $\Lambda^\delta \in \R^m$, according to the following degradation model
\begin{equation}
\label{eq:cs}
\Lambda^\delta = \Phi(\sigma^\dag)+\eta,
\end{equation}
where $\eta \in \R^m$ is a vector of unknown perturbations, bounded by a known amount $\|\eta\|_2^2\le \delta^2$, and $\Phi: \R^n \rightarrow \R^m$ represents a nonlinear ill-posed sensing model characterized by an undersampled acquisition in which the number $m$ of available measurements is much smaller ($m \ll n$) than the dimension $n$ of the vector $\sigma$.

We moreover assume that, although $\sigma$ belongs to the high-dimensional space $\R^\nt$, it can be represented by a few degrees of freedom. In doing so, we take a step forward from the classical sparsity assumption and search for conductivities $\sigma$ that are $\sigma_0$-sparse, being $\sigma_0$ a reference constant vector. This accounts to say that $\sigma$ is equal to $\sigma_0$ apart from a few coefficients, thus the set of indices of $I_{\mathcal{O}}=\{ i : (\sigma_i-\sigma_{0,i})  \neq 0 \}$ is assumed to have small cardinality $s$ (roughly $O(m)$). 
In the context of CS, several algorithms have been proposed to enforce (or promote) this property, based on projection and variational techniques. By analogy with the CS linear case, and motivated furthermore by the ill-posed nature of $\Phi$ (which typically prevents problem \eqref{eq:cs} from having a unique and stable solution) we propose to recover the vector $\sigma^\dag$ by solving the following regularized minimization problem:
\begin{equation}
\min _{\sigma \in \R^{n}} \mathcal{R}(\sigma) \, \quad \mbox{subject to} \quad \, \|\Phi(\sigma) - \Lambda^\delta\|_2^2 \le \delta^2,
\label{eq:prob}
\end{equation}
where $\mathcal{R} \in \Gamma_0(\R^n)$ is a sparsity-promoting penalty and we denote by $\Gamma_0(\R^n)$ the space of proper, lower semi-continuous, and convex funcionals on $\R^n$. 
In particular, we will consider functionals $\mathcal{R}$ that pursue sparsity promotion in two different, complementary, ways: by promoting low values of a penalization functional and by projecting on a predetermined set $K$. 
The most standard instance in the class of nonsmooth, convex penalties is the $\ell_1$-norm functional, which is known to promote sparsity in the solution domain; nevertheless, we may also opt for other choices, such as the Total Variation penalty \cite{RUDIN1992259}, to enforce other significant prior knowledge on the solution, such as sparsity in the gradient domain.
On the other hand, the projection onto the set $K$ can be employed to impose state constraints (such as non-negativity, or box constraint) or to enforce sparsity likewise. 
The approach we propose in this paper draws connections with the problem known as \textit{support estimation}, namely, with the task of determining the support of the exact solution from the given measurements. We thus introduce the concept of \textit{Oracle}: an operator receiving as an input the 
measurements $\Lambda^\delta$ and returning a (perfect or approximate) description of its support $K$, for example through the set of active indices $I_{\mathcal{O}}$. 
Such an Oracle operator must be determined before the resolution of problem \eqref{eq:prob}: in this paper, we pursue this task in a supervised statistical learning fashion, by training a Neural Network via a dataset of ground truth solutions (hence, their supports) and associated measurements.
We consider, in particular, Graph Neural Networks (GNNs)
based on a graph U-Net architecture, which naturally fits the mesh data structure used to represent the spatial domain. GNNs are a class of neural network architecture designed to perform inference on data described by graphs \cite{ZHOU202057}. The benefit of using GNN is related to the specific nonlinear problem considered as a case test of the proposed CS framework, which requires a FEM solution on a mesh domain.
Once the Oracle has been learned, we can employ the set $K$ within the definition of $\mathcal{R}$ and tackle the solution of problem \eqref{eq:prob}.

Independently of the choice of $\mathcal{R}$, our strategy then envisages the solution of \eqref{eq:prob}
using a first-order optimization scheme, namely, the Proximal Gradient Method (PGM).
Despite the regularizing effect of $\mathcal{R}$, because of the nonlinear nature of $\Phi$, the convergence properties of such a scheme are only guaranteed under additional assumptions.

The main theoretical results of this paper are reported in Proposition \ref{prop:conv_rate} and in Theorem \ref{thm:blumensath}. The first one shows that minimizer of the proposed regularization functional converges to a solution of the inverse problem \eqref{eq:cs} as the noise level $\delta$ goes to zero, also providing some convergence rates. 
The second result concerns error bounds on the approximate solution obtained via the proposed Oracle-based PGM. Drawing from a result in \cite{blumensath2013compressed}, we show the convergence of such a scheme to a cluster point whose distance from an exact solution of \eqref{eq:cs} can be bounded in terms of $\delta$ (and, possibly, of the choice of the support $K$).
In order to derive similar error bounds in the context of linear CS, it is usually necessary to introduce theoretical assumptions on the forward operator, such as the RIP, which do not carry over directly to the nonlinear case. 
In this work, we show that, under the requirements that the Jacobian of the measurement system $\Phi$ satisfies a similar RIP condition and that $\Phi$ is mildly nonlinear, the accurate recovery of $\sigma^\dag$ is possible using the proposed Oracle-based PGM algorithm.

Due to the many difficulties of this challenging CS recovery problem, we will focus on a specific nonlinear measurement process that is involved in the Electrical Impedance Tomography (EIT), a promising non-invasive imaging technique mathematically formulated as a highly nonlinear ill-posed inverse problem \cite{Somersalo}. 
Nevertheless, our Oracle-based framework could be adapted to other nonlinear ill-posed problems.
EIT aims to reconstruct the inner conductivity distribution of a medium starting from a set of measured voltages registered by a series of electrodes that are
positioned on the surface of the medium. EIT is therefore a nondestructive testing technique, meaning that it allows analyzing the inner properties of a material or structure without causing damage.

Several CS strategies have been explored in the context of linearized EIT reconstruction problem, see \cite{Teh, GEHRE}, as a response to a growing application need.
By leveraging CS, in \cite{Shiraz} the authors explore the possibility of achieving accurate breathing monitoring with EIT while reducing the number of measurements needed.
 The linearized EIT problem is considered from a theoretical perspective in \cite{Alberti_2021} where the authors show that the electrical conductivity may be stably recovered from a number of linearized EIT measurements proportional to the sparsity of the signal with respect to a wavelet basis, up to a log factor.
 
Real-world EIT data exhibits nonlinearities. To address this challenge, Zong et al. in \cite{Zong} recently proposed a novel scheme that introduces the concept of compressive learning. 
Recently, a learned residual approach to EIT has been proposed in \cite{SSVM2023} for nonlinear undersampled measurements.
In \cite{Chen2022} a mask-guided
spatial–temporal graph neural network is proposed to reconstruct multifrequency EIT images in cell culture imaging. The binary masks in \cite{Chen2022} are obtained by CT scans and microscopic images using a multimodal imaging setup. 

In this work, the concept of compressed learning is realized through an Oracle-Net that predicts the optimal prior in the variational model which improves the efficacy of the sparse optimization algorithm.
The binary masks are thus automatically determined by the Oracle-Net starting from given measurements.
We finally would shed light on the problem of determining how few measurements suffice for
an accurate EIT sparsity-regularized reconstruction.

\vspace{0.3cm}

The remainder of this paper is structured as follows. In Section \ref{sec:sec2} related works on nonlinear compressed sensing are presented.
Section \ref{sec:sec3} presents the EIT forward and inverse models in their continuous settings. 
In Section \ref{sec:sec4} we provide general theoretical results on the well-posedness of the regularized problem, we introduce the PGM-based numerical method for EIT reconstruction, and we prove error bounds on the sparsity-aware approximate solution.
Section \ref{sec:sec5} examines some specific regularization models employed and discusses their algorithmic optimization. 
Section \ref{sec:sec6} explores the architecture of the proposed Oracle-Net for support estimation. In Section \ref{sec:sec7} we present the reconstruction results obtained by the Oracle-based PGM algorithm. Finally,  Section \ref{sec:sec8} draws conclusions and discusses future work.

\section{Related works}
\label{sec:sec2} 
 The treatment of linear compressed measurement models for ill-posed problems and recovery accuracy estimates has been first addressed in \cite{Herrholz_2010}.

A preliminary exploration of nonlinear CS can be found in the work on CS phase retrieval  \cite{Baraniuk} and on 1-bit CS \cite{Bar_1B}. 
A first attempt to extend greedy gradient-based strategies to the nonlinear case is in \cite{B2008}. Subsequently, in \cite{blumensath2013compressed}
 Blumensath pioneered the theoretical framework for nonlinear compressed sensing. It has been demonstrated that, under conditions similar to the RIP, the iterative hard thresholding (IHT) algorithm can effectively recover sparse or structured signals from a small number of nonlinear measurements.
Along this direction, further researches introduced various approaches for tackling nonlinear CS problems. In  \cite{Ohlsson2014,O2013} Ohlsson et al. proposed algorithms like quadratic basis pursuit and nonlinear basis pursuit, both of which utilize convex relaxations to solve these problems. In addition, research on sparsity-constrained nonlinear optimization, as explored in \cite{BeckEldar}, further enriched the field of nonlinear CS.
A generalization of the RIP condition for certain randomized quasi-linear measurements is proposed in \cite{Fornasier}.

The power of nonlinear CS extends beyond theoretical developments. Practical applications validate its effectiveness. For example, works like \cite{Klodt} demonstrate its application in single-snapshot compressive tomosynthesis, while the proposal in \cite{OPC}  leverages the inherent sparsity of mask patterns to formulate the Optical Proximity Correction problems as inverse nonlinear CS problems, enabling significant efficiency gains.

The aforementioned works seem to provide a CS solution to nonlinear inverse problems neglecting the effect of the ill-posedness of several nonlinear problems. In this work, the combined effect of a sparsifying Oracle-Net and an effective regularization guarantees accurate reconstructions of the original sparse signal using a limited number of nonlinear measurements.

\section{EIT Forward and Inverse models}
\label{sec:sec3}

In this section, we describe the EIT problem in its original continuous setup, focusing on the so-called \textit{complete electrode model} (CEM), both from a forward and an inverse perspective. Our goal is to clearly state the problem of interest in a functional space framework, as well as to motivate the discretization that will be later adopted. Moreover, we collect in Theorem \ref{thm:Lechleiter} the most relevant theoretical results regarding the nonlinear operator describing the problem, mainly relying on the approach and assumptions adopted in \cite{lechleiter2008newton}.
\par
Consider a conductive body in a 2-dimensional space, described as a bounded, simply connected, Lipschitz domain $\Omega \subset \mathbb{R}^2$ with a piece-wise $C^\infty$ boundary $\partial \Omega$. Its electrical conductivity is modeled as a function $\sigma \colon \Omega \rightarrow \R$, which we assume to be bounded, measurable, and larger than a strictly positive constant $c_0$:
\begin{equation}
\sigma \in L^{\infty}(\Omega), \qquad \sigma(x) \geq c_0 > 0 \quad \text{ a.e. in $\Omega$}.
    \label{eq:sigma}
\end{equation}
The classical formulation of the EIT problem, also known as \textit{continuum model}, seeks to reconstruct the function $\sigma$ by applying electrical stimuli on the boundary $\partial \Omega$ (i.e., injected currents), and recording the resulting electrical potential, again on the whole boundary $\partial \Omega$. A more realistic scenario is described via CEM, in which one assumes that the process of probing the electrical properties of $\Omega$ is performed by means of some electrodes located on the boundary of the domain.
Consider a collection of $p$ electrodes $\{E_j \}_{j=1}^p \subset \partial \Omega$: from a modeling perspective, they are resistive regions (with electrical permittivity $z$), on which it is possible to apply an external current $I_j$. We assume that each $E_j$ is an open, non-empty subset of $\partial \Omega$, and that $E_i \cap E_j = \emptyset \text{ for $i \neq j$}$. The electrical measurements, both of currents and voltages, associated with the CEM are assumed to be piece-wise constant functions on $\partial \Omega$: we define the electrode space as 
\begin{equation}
    \mathcal{E}_p = \left\{F \in L^2(\partial \Omega): \quad F(x) = \sum_{j=1}^p F_j \mathbf{1}_{E_j}(x), \quad \int_{\partial \Omega} F(y)dy = \sum_{j=1}^p F_j |E_j| = 0 \right\},
    \label{eq:electrode_space}
\end{equation}
where $\mathbf{1}_{E_j}$ denotes the indicator function of $E_j \subset \partial \Omega$. Notice that $F \in \mathcal{E}_p$ can be uniquely determined by means of $p-1$ real parameters, thus $\mathcal{E}_p$ can be equivalently represented as $\R^{p-1}$. We denote by $E = \bigcup_{j=1}^p E_j$ and by $\Gamma = \partial \Omega \setminus E$.
\par
The forward problem of EIT, in the CEM paradigm, consists in determining the couple $(u,U) \in Y = H^1(\Omega) \times \mathcal{E}_p$, representing the inner electrical potential and the boundary voltages at the electrodes, associated with the input boundary currents $I \in \mathcal{E}_p$ through the following differential problem:
\begin{equation}
\label{CEM}
\left\{
\begin{aligned}
- \operatorname{div} (\sigma \nabla u)&= 0 \quad  & \mbox{in} \  \Omega, \\
u + z \sigma \frac{\partial u}{\partial n}&= U_l & \mbox{on} \  \{E_j\}_{j=1}^p, \\
\int_{E_l}{\sigma \frac{\partial u }{\partial n} \d s} &= I_l & \mbox{$j=1,\ldots,p$}, 
\\
\sigma \frac{\partial u}{\partial n} &=0 &  \mbox{on}  \  \Gamma.
\end{aligned}
\right.
\end{equation}
Here $z$ represents the known contact impedance of the electrodes.
The existence and uniqueness of a weak solution of the boundary problem \eqref{CEM} is proved in \cite{Somersalo}, and its numerical approximation is effectively treated, e.g., via finite element method (FEM). 
\par
Denote by $\mathcal{L}(\mathcal{E}_p)$ the space of bounded linear operators from $\mathcal{E}_p$ to $\mathcal{E}_p$ (isomorphic to the matrix space $\R^{(p-1)\times(p-1)}$).
For a fixed conductivity $\sigma$ satisfying \eqref{eq:sigma}, the well-posedness result implies that, for every choice of input boundary current $I \in \mathcal{E}_p$, the couple $(u,U) \in Y$ is uniquely determined, and we can denote by $\Lambda^\delta_\sigma$ the operator that associates a current $I \in \mathcal{E}_p$ with the corresponding boundary voltage $U \in \mathcal{E}_p$. We can thus define the \textit{forward map} of EIT, namely, the operator $F_p \colon L^\infty(\Omega) \rightarrow \mathcal{L}(\mathcal{E}_p)$, which returns, for each conductivity $\sigma$, the currents-to-voltages operator $\Lambda^\delta_\sigma$:
\begin{equation}
F_p \colon \sigma \in L^{\infty}(\Omega) \ \mapsto\ \Lambda^\delta_\sigma = F_p(\sigma) \in \mathcal{L}(\mathcal{E}_p): \quad \Lambda^\delta_\sigma I = U \ \text{ s.t. $(u,U)\ $ solves \eqref{CEM}}.
    \label{eq:F_p}
\end{equation}
Notice that for each $\sigma$, the operator $\Lambda^\delta_\sigma$ can be identified by a square matrix in $\R^{(p-1)\times(p-1)}$. Moreover, since the problem \eqref{CEM} is symmetric in the variables $U$ and $I$, we can efficiently represent $\Lambda_\sigma^\delta$ as a symmetric matrix, or as a vector in $\R^{\mp}$, being $\mp = \frac{p(p-1)}{2}$ the number of independent measurements associated with $p$ electrodes. 
\par
The \textit{inverse problem} of EIT is to reconstruct $\sigma$ from the knowledge of $\Lambda^\delta_\sigma$. Observe that, since the available measurements are only finite-dimensional, we can only hope to recover conductivities depending on a finite number of degrees of freedom. 
In particular, we introduce a conformal triangular partition $\Tau$ over $\Omega$ and define the Finite Element space of piecewise affine, continuous, and uniformly bounded functions
\begin{equation}
\Wt = \bigg\{ v\in C(\overline{\Omega})  \quad \text{s.t.} \quad v|_{T_k} \in \mathbb{P}_1(T)\ \ \forall T \in \Tau \bigg\},
    \label{eq:W_tau}
\end{equation}
where $\mathbb{P}_1(T)$ denotes the space of polynomials of degree $1$ on the triangle $T$. This choice fits the assumptions in \cite{lechleiter2008newton}, where the partition can also be non-triangular and non-conformal and the functions in $\Wt$ are bounded piece-wise polynomials of arbitrary degree.

A $\nt$-dimensional basis can be defined on $\Wt$, where $\nt$ is the number of vertices in the partition $\Tau$, by means of the piece-wise affine functions $\{\phi_i\}_{i=1}^\nt$, each of which attains the values $1$ at the $i$-th vertex and $0$ at all the other ones. A function in $\Wt$ is thus determined by its values at the vertices, $\{v_i\}_{i=1}^n$, which implies that $\Wt$ can be identified with $\R^{\nt}$, and the forward map $F_p$ restricted on $\Vt$ can be interpreted as a nonlinear function from $\R^{\nt}$ to $\R^{\mp}$. Its Fréchet derivative with respect to the $L^\infty$ norm, denoted by $F'_p$ can therefore be represented as a $\mp \times \nt$ matrix, the Jacobian matrix of the described vector field. 
\par
In \cite{lechleiter2008newton}, several results regarding the map $F_p$ (and its derivative $F'_p$) can be found. We are nevertheless interested in the properties of a strongly related map, $F_{p,h}$, which arises by a further discretization of the problem, induced by the numerical approximation of the differential problem \eqref{CEM}. In particular, as in \cite{lechleiter2008newton}, we introduce a computational mesh $\Tau_h$ on $\Omega$, in general unrelated to the previously introduced $\Tau$, and consisting of $N_h$ triangles. We approximate the differential problem \eqref{CEM} through a Finite Element scheme, 
and introduce the operator $\Lambda^\delta_{\sigma,h} \in \mathcal{L}(\mathcal{E}_p)$, which maps every $I_h \in \mathcal{E}_p$ into the potential $\Lambda^\delta_{\sigma,h}I_h = U_h$ such that $(u_h,U_h)$ is the output of the Finite-Element solver of \eqref{CEM}. Then, we can define $F_{p,h}$ as the map from $\Wt$ to $\mathcal{L}(\mathcal{E}_p)$ that associates a conductivity $\sigma$ to $\Lambda^\delta_{\sigma,h}$.
We now recap the main theoretical properties satisfied by $F_{p,h}$, which are analyzed in \cite{lechleiter2008newton}. In particular, we employ further knowledge of the desired solution to limit our search to the space
\begin{equation}
\Vt = \bigg\{ v\in \Wt  \quad \text{s.t.} \quad  c_0 \leq v(x) \leq c_1 \ \ \forall x \in \Omega \bigg\},
    \label{eq:V_tau}
\end{equation}
being $0<c_0<c_1$. These uniform bounds of the desired conductivities allow for a simpler expression for the estimates contained in \cite{lechleiter2008newton}.

\begin{theorem} (Properties of $F_{p,h}$ and $F'_{p,h}$, from \cite{lechleiter2008newton})
\label{thm:Lechleiter}
\begin{enumerate}
    \item The operator $F_{p,h}$ is Fréchet differentiable and its derivative $F'_{p,h}(\sigma)$ is Lipschitz continuous for all $\sigma \in \Vt$
    \item Local injectivity: there exist and integer $\pt$ and two positive constants $h_\Tau, \Gammat$ such that, for $p > \pt$ and $h \leq h_\Tau$, 
    \[
    \| F'_{p,h}(\sigma)[\theta]\|_{\mathcal{L}(\mathcal{E}_p)} \geq \Gammat \| \theta \|_{L^{\infty}} \quad \forall \sigma \in \Vt, \ \forall \theta \in \Wt.
    \]
    \item Tangential cone condition: for each $\sigma \in \operatorname{int}(\Vt)$, there exist a radius $r_\Tau$ and a constant $C_\Tau$ such that, for $p > \pt$ and $h \leq h_\Tau$, if $\|\tau - \sigma\| \leq r_\Tau$,
    \[
    \| F_{p,h}(\tau) - F_{p,h}(\sigma) - F'_{p,h}(\sigma)[\tau - \sigma]\|_{\mathcal{L}(\mathcal{E}_p)} \leq C_\Tau \| F_{p,h}(\tau) - F_{p,h}(\sigma)\|_{\mathcal{L}(\mathcal{E}_p)}
    \]
\end{enumerate}
\end{theorem}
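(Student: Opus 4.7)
Since the statement is essentially a transcription of results from \cite{lechleiter2008newton}, the plan is to re-derive each item by exploiting the weak formulation of the discretized CEM. Write the weak form of \eqref{CEM} as $a_\sigma((u,U),(v,V)) = L(I,(v,V))$, where the bilinear form $a_\sigma$ is linear in the conductivity $\sigma$ and the functional $L$ encodes the injected current $I$. Restricting trial and test spaces to the Finite Element space associated with $\Tau_h$ produces a linear system with stiffness matrix $A(\sigma)$ that is affine in $\sigma$. Since $\sigma \in \Vt$ is uniformly bounded below by $c_0$ and above by $c_1$, the matrix $A(\sigma)$ is uniformly coercive on the discrete space, hence uniformly invertible with bounds depending only on $c_0, c_1$ and on the discretization parameters.

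For item 1, Fréchet differentiability follows by applying the implicit function theorem to the map $\sigma \mapsto A(\sigma)^{-1}$, which is smooth because $A(\sigma)$ is affine and uniformly invertible on $\Vt$. An explicit form of $F'_{p,h}(\sigma)[\theta]$ is obtained as the current-to-voltage operator of an auxiliary discretized CEM whose right-hand side involves $\theta \nabla u_h$. Lipschitz continuity of $\sigma \mapsto F'_{p,h}(\sigma)$ in the operator norm then follows by combining the smoothness of $\sigma \mapsto A(\sigma)^{-1}$ with the uniform two-sided bounds on $\sigma$ and with the boundedness of $\Vt$ in $L^\infty(\Omega)$.

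For item 2, the coercivity estimate $\|F'_{p,h}(\sigma)[\theta]\|_{\mathcal{L}(\mathcal{E}_p)} \geq \Gammat \|\theta\|_{L^\infty}$ is the discrete counterpart of the injectivity of the linearized continuum forward map. I would first quote the infinite-dimensional stability result, whose proof relies on unique continuation for elliptic equations and requires enough independent current patterns, which dictates the threshold $p > \pt$. One then transfers the estimate to the finite-dimensional setting by a compactness-and-contradiction argument on $\Wt$: assuming no constant $\Gammat$ works, extract a subsequence of unit-norm $\theta_k \in \Wt$ with $\|F'_{p,h}(\sigma)[\theta_k]\| \to 0$; exploiting the finite dimension of $\Wt$ and the convergence of $F'_{p,h}(\sigma)$ to the continuum linearization as $h \to 0$, one recovers a nonzero limit with vanishing linearized measurement, contradicting the continuum stability. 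This fixes the threshold $h \leq h_\Tau$.

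For item 3, I would write the remainder as a first-order Taylor integral
\begin{equation*}
F_{p,h}(\tau) - F_{p,h}(\sigma) - F'_{p,h}(\sigma)[\tau - \sigma] = \int_0^1 \bigl(F'_{p,h}(\sigma + t(\tau-\sigma)) - F'_{p,h}(\sigma)\bigr)[\tau - \sigma]\, dt,
\end{equation*}
bound it by $\tfrac{L}{2}\|\tau-\sigma\|_{L^\infty}^2$ using the Lipschitz constant $L$ from item 1, and then use item 2 combined with the reverse triangle inequality to obtain
\begin{equation*}
\|F_{p,h}(\tau) - F_{p,h}(\sigma)\|_{\mathcal{L}(\mathcal{E}_p)} \geq \Gammat \|\tau - \sigma\|_{L^\infty} - \tfrac{L}{2}\|\tau - \sigma\|_{L^\infty}^2.
\end{equation*}
Choosing $r_\Tau$ so small that $L r_\Tau \leq \Gammat$ upgrades the right-hand side to $\tfrac{\Gammat}{2}\|\tau-\sigma\|_{L^\infty}$, which can be inverted to bound $\|\tau - \sigma\|_{L^\infty}$ by a multiple of $\|F_{p,h}(\tau) - F_{p,h}(\sigma)\|$; substituting back into the quadratic remainder bound yields the TCC with $C_\Tau = L r_\Tau/\Gammat$. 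The main obstacle is precisely this quantitative matching: the constants $L$ and $\Gammat$ originate from item 1 and item 2 under the thresholds $\pt$ and $h_\Tau$, so one has to check they can be chosen uniformly over $\sigma \in \operatorname{int}(\Vt)$ and then tune $r_\Tau$ small enough that the bootstrap closes; keeping the dependence on the discretization explicit, while not letting the constants blow up as $\sigma$ approaches the boundary of $\Vt$, is the delicate part of the argument.
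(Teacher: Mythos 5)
The paper does not actually prove this theorem: it is imported verbatim (with minimal modifications) from Lechleiter and Rieder, and the text only records which lemmas and theorems of that reference correspond to each item. Your proposal is therefore a genuinely different route, namely a from-scratch reconstruction of the underlying arguments, and as a sketch it is essentially sound and consistent with how these results are actually established. Item 1 via the affine dependence of the stiffness matrix on $\sigma$, uniform coercivity on $\Vt$, and smoothness of $\sigma\mapsto A(\sigma)^{-1}$ is the standard derivation. Your item 3 is the classical ``Lipschitz derivative plus linearized stability implies tangential cone condition'' bootstrap, and it is correct as written; it even recovers the sharper feature of the original Theorem 4.9, namely that $C_\Tau = L r_\Tau/\Gammat$ can be made arbitrarily small by shrinking $r_\Tau$, which is what makes the TCC useful for Newton-type convergence. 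What your reconstruction buys is transparency about where the constants come from; what the paper's citation buys is that the genuinely hard analytic content (the continuum stability with finitely many electrodes) is delegated to a reference where it is treated carefully.

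Two points deserve tightening. First, in item 2 the contradiction argument as phrased runs over a sequence $\theta_k$ at fixed $h$, but the conclusion you need is the existence of a single threshold $h_\Tau$ valid for all $h\leq h_\Tau$ and all $\sigma\in\Vt$ simultaneously; the negation of that statement forces you to take a diagonal sequence $h_k\to 0$, $\sigma_k\in\Vt$, $\theta_k\in\Wt$ with $\|\theta_k\|_{L^\infty}=1$, extract convergent subsequences using the compactness of $\Vt$ and of the unit sphere of the finite-dimensional space $\Wt$, and pass to the limit using the uniform convergence $F'_{p,h}\to F'_p$ as $h\to 0$. The cleaner (and the reference's) route is quantitative rather than by contradiction: establish $\|F'_p(\sigma)[\theta]\|\geq \Gamma'\|\theta\|_{L^\infty}$ uniformly on $\Vt\times\Wt$ and then absorb the FEM consistency error $\|F'_{p,h}(\sigma)[\theta]-F'_p(\sigma)[\theta]\|\leq \varepsilon(h)\|\theta\|_{L^\infty}$ into half of $\Gamma'$. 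Second, the continuum lower bound itself, which you quote, is not a generic unique-continuation statement: its proof crucially uses that $\theta$ ranges over the fixed finite-dimensional space $\Wt$ (injectivity of the continuum linearization restricted to piecewise polynomials on a fixed partition, plus approximation of the continuum model by the electrode model as $p\to\infty$, which is where $\pt$ arises). Quoting it is acceptable here since the theorem is itself a citation, but your sketch should make explicit that this is the step on which everything else rests.
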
 Note that the result on the tangential cone condition controls the linearization error by the nonlinear Taylor remainder.

The first statement follows by a combination of Lemma 2.3, Lemma 4.1, and Lemma 4.6 in \cite{lechleiter2008newton} (see also \cite{lechleiter},\cite{WANG2021},\cite{Kindermann2021OnTT}), whereas the second and the third statements, with minimal modifications, are the objects of Theorem 4.7 and Theorem 4.9 in \cite{lechleiter2008newton}, respectively. In contrast with the formulation of these results presented therein, the values of $p_\Tau, h_\Tau, \Gammat$ depend on the bounds $c_0, c_1$ on the conductivities, which is considered as a parameter in the definition of $\Vt$. Explicit expressions or bounds for $\pt,h_\Tau$ and $\Gammat$ are not available, and their definitions are not constructive, as they also involve the Fréchet derivative on the forward map of $F_p$ and of the analogous operator of the continuum model of EIT.

\section{A regularized constrained model for the EIT inverse problem}\label{sec:sec4}

In this section, we introduce a reconstruction model for the inverse problem of EIT in the CEM formulation. Thanks to the discrete nature both of $\sigma$ and of the measurements $\Lambda^\delta_{\sigma,h}$, this problem can be formulated as the resolution of a nonlinear (ill-posed) system of algebraic equations. We introduce a variational regularization strategy, which involves the minimization of a (non-convex, non-smooth) functional, for which we will describe in Section \ref{sec:PGM} an iterative scheme based on the Proximal-Gradient method \cite{Beck_Teboulle_2009}. 
\par
As already discussed in Section \ref{sec:sec3}, we look for solutions of the inverse problem of EIT in a finite-dimensional space of piece-wise constant conductivities, $\Wt$, which can be naturally identified with $\R^{\nt}$, and more precisely within $\Wt$, identified with the hyper-cube $K_{0,1}=[c_0,c_1]^{\nt}$. The datum of the inverse problem, i.e., the currents-to-voltage operator $\Lambda^\delta_{\sigma,h}$, also belongs to a finite-dimensional space, $\mathcal{L}(\mathcal{E}_{p,h})$, which can be identified with $\R^{\mp}$.
Nevertheless, in the context of applications, it is most common to provide alternative, often redundant, representations of the operator $\Lambda^\delta_{\sigma,h}$ by considering $n_c$ different current patterns (often associated with the activation of a few, adjacent or opposite, electrodes) and recording the voltage in (other) $n_v$ electrodes. All these modeling choices define the so-called measurement protocol, as well as the total number of measurements $\nM = n_c n_v$. We assume that $\nM \geq \mp$, and identify the measurement space with $\R^\nM$.  
\par
For fixed discrete mesh $\Tau$ and measurement protocol,  we denote by $\Phi\colon \R^{\nt} \rightarrow \R^{\nM}$ the nonlinear operator representing $F_{p,h}$ from $\Wt$ to the (redundant, $m$-dimensional) representation of the measurement space $\mathcal{L}(\mathcal{E}_{p,h})$. The inverse problem of EIT is thus equivalent to recovering $\sigma^\dag \in K_{0,1}=[c_0,c_1]^{\nt}$ from the noisy measurements $\Lambda^\delta \in \R^{\nM}$ under the degradation model \eqref{eq:cs}.

In this section, we analyze the variational regularization strategy associated with the following constrained optimization problem:
\begin{equation}
    \sigmaL \in \argmin_{\sigma \in K} \left\{ \mathcal{J}^\delta_\lambda(\sigma):= \frac{1}{2}\| \Phi(\sigma) - \Lambda^\delta \|^2 + \frac{\lambda \rho}{2} \| \sigma \|^2 + \lambda R(\sigma) \right\},
    \label{eq:regularization_constr}
\end{equation}
where $\rho, \lambda >0$, $R \colon \R^\nt \rightarrow \R \cup \{\infty\}$ is a non-negative, continuous, coercive, and convex functional, and $K$ is a compact, convex subset of $\R^\nt$ satisfying $K \subset K_{0,1}$. 

By introducing  the characteristic function $\upchi_K$ of the set $K \subset \R^\nt$, namely
\[
\upchi_K(x) = \left\{ \begin{aligned} 0 \quad \text{if } x \in K \\ \infty \quad \text{if } x \notin K \end{aligned} \right.\ ,
\] 
allows to incorporate the constraint $\sigma \in K$ into the minimization problem \eqref{eq:regularization}, thus obtaining the following equivalent unconstrained optimization problem 
\begin{equation}
    \sigmaL \in \argmin_{\sigma \in \R^\nt} \left\{ \mathcal{J}^\delta_\lambda(\sigma):= \frac{1}{2}\| \Phi(\sigma) - \Lambda^\delta \|^2 + \frac{\lambda \rho}{2} \| \sigma \|^2 + \lambda R(\sigma) + \upchi_K(\sigma) \right\}.
    \label{eq:regularization}
\end{equation}

The functional $\mathcal{J}_\lambda^\delta$ presents two regularization terms (namely, the square norm and the convex functional $R$). It would be possible to consider the two regularization parameters as independent: nevertheless, as this would not provide any significant difference in the theoretical analysis, similarly to the Elastic-Net paradigm \cite{zou2005regularization,de2009elastic}, we consider their ratio $\rho$ as fixed and interpret the functional as depending on a single parameter, $\lambda$.

In the following we discuss some theoretical properties of problem \eqref{eq:regularization} and of its solutions $\sigmaL$. To ease the notation, we denote the overall regularization functional employed in \eqref{eq:regularization} by
\begin{equation}
    \mathcal{R}(\sigma) = R(\sigma) + \frac{\rho}{2} \| \sigma \|^2 + \upchi_K(\sigma),
    \label{eq:glob_reg}
\end{equation}
so that the functional to be minimized can be simply written as
\[
\mathcal{J}^\delta_\lambda(\sigma) = \frac{1}{2}\|\Phi(\sigma)-\Lambda^\delta\|^2 + \lambda \mathcal{R}(\sigma).
\]
It is easy to observe that such a functional is continuous and coercive and that the minimization is performed on a compact set $K$: thus, the existence of (at least) a solution of \eqref{eq:regularization} is guaranteed by classic arguments (see e.g. \cite{benning2018modern}). Nevertheless, we cannot conclude the uniqueness of such solutions due to the nonlinearity of $\Phi$.

The solutions of \eqref{eq:regularization} are also stable with respect to perturbations of the datum $\Lambda^\delta$: introducing a sequence $\{\Lambda_k\} \subset \R^\nM$ such that $\Lambda_k \rightarrow \Lambda^\delta$, and considering a sequence of minimizers $\{\sigma_k\}$ of the functionals $\mathcal{J}_\lambda^k$, obtained by replacing $\Lambda^\delta$ with $\Lambda_k$ in $\mathcal{J}_\lambda^\delta$, then the limit of every convergent subsequence of $\{\sigma_k\}$ is a minimizer of $\mathcal{J}_\lambda^\delta$. This can be proved, with slight modifications, as in \cite[Theorem 10.2]{engl1996regularization}.

We now focus our attention on extending the convergence result \cite[Theorem 10.4]{engl1996regularization} to the optimization problem \eqref{eq:regularization}. Indeed we prove that, under suitable assumptions on $\Phi$ and $\sigma^\dag$ and for a specific choice of $\lambda$, as $\delta \rightarrow 0$, the minimizers $\sigmaL$ of \eqref{eq:regularization} converge to a solution of the inverse problem $\sigma^\dag$. 
We state the result in a general formulation, outlining all the properties required on the operator $\Phi$, which are verified in our setup as discussed in Remark \ref{rem:verify}. Hereafter, we denote by $J_{\Phi}(\sigma) \in \R^{\nM \times \nt}$ the Jacobian matrix of $\Phi$ computed at a point $\sigma \in \R^\nt$.

\begin{prop}
\label{prop:conv_rate}
    Let $\sigmaL$ be a local minimizers of \eqref{eq:regularization}, being $\mathcal{R}$ as in \eqref{eq:glob_reg} and $\Phi$ such that the following assumptions are satisfied:
    \begin{enumerate}
        \item Source condition: there exist $p^\dag \in \partial \mathcal{R}(\sigma^\dag)$ and $w \in \R^m$ such that
        \begin{equation}
        p^\dag = J_{\Phi}(\sigma^\dag)^T w
           \label{eq:SC}
        \end{equation}
        \item Mild non-linearity of $\Phi$ in $\sigma^\dag$: there exists $\gamma > 0$ such that
        \begin{equation}
            \| \Phi(\sigma) - \Phi(\sigma^\dag) - J_{\Phi}(\sigma^\dag)(\sigma - \sigma^\dag) \|^2 \leq \gamma \| \sigma - \sigma^\dag \|^2
            \qquad \forall \sigma \in K_{0,1}.
            \label{eq:nonlin_ass_dag}
        \end{equation}
    \end{enumerate}
    Assume moreover that $ 2 \gamma \| w\| \leq \rho$. Then, as $\lambda \rightarrow 0$, the sequence $\sigmaL$ converges to $\sigma^\dag$ and, for the choice $\lambda \sim \delta$, the following convergence rate holds:
    \begin{equation}
        \| \sigmaL -\sigma^\dag \| = O(\sqrt{\delta})
        \label{eq:conv_rate}
    \end{equation}
\end{prop}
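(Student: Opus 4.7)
My approach is the classical Bregman-distance argument for Tikhonov-type regularization of nonlinear inverse problems (in the spirit of \cite[Thm.~10.4]{engl1996regularization}), adapted here to exploit the $\rho$-strong convexity that $\mathcal{R}$ inherits from its $\tfrac{\rho}{2}\|\cdot\|^2$ summand, together with the two standing assumptions on $\Phi$.

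The starting point is the minimality of $\sigmaL$: testing the inequality $\mathcal{J}^\delta_\lambda(\sigmaL) \le \mathcal{J}^\delta_\lambda(\sigma^\dag)$ against $\sigma^\dag$ and using $\|\Ld - \Phi(\sigma^\dag)\| \le \delta$, one obtains
\[
\tfrac{1}{2}\|\Phi(\sigmaL) - \Ld\|^2 + \lambda\bigl[\mathcal{R}(\sigmaL) - \mathcal{R}(\sigma^\dag)\bigr] \le \tfrac{\delta^2}{2}.
\]
I would then decompose the regularization increment as $\mathcal{R}(\sigmaL) - \mathcal{R}(\sigma^\dag) = D^{p^\dag}_{\mathcal{R}}(\sigmaL,\sigma^\dag) + \langle p^\dag, \sigmaL - \sigma^\dag\rangle$, noting that the Bregman distance is bounded below by $\tfrac{\rho}{2}\|\sigmaL - \sigma^\dag\|^2$ thanks to the quadratic summand in \eqref{eq:glob_reg}. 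Inserting the source condition \eqref{eq:SC} converts the linear term into $\langle w, J_\Phi(\sigma^\dag)(\sigmaL - \sigma^\dag)\rangle$, which I would split as
\[
\langle w, \Phi(\sigmaL) - \Phi(\sigma^\dag)\rangle \;-\; \langle w, \Phi(\sigmaL) - \Phi(\sigma^\dag) - J_\Phi(\sigma^\dag)(\sigmaL - \sigma^\dag)\rangle.
\]
The first piece is estimated via $\Phi(\sigma^\dag) = \Ld - \eta$ and Cauchy--Schwarz by $\|w\|\bigl(\|\Phi(\sigmaL)-\Ld\| + \delta\bigr)$, while the second is controlled by assumption \eqref{eq:nonlin_ass_dag}, producing a contribution of order $\gamma\|w\|\|\sigmaL - \sigma^\dag\|^2$.

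Assembling these bounds and applying Young's inequality to absorb $\lambda\|w\|\|\Phi(\sigmaL)-\Ld\|$ into $\tfrac{1}{2}\|\Phi(\sigmaL)-\Ld\|^2$ on the left-hand side, the estimate collapses to the schematic form
\[
\lambda\bigl(\tfrac{\rho}{2} - \gamma\|w\|\bigr)\|\sigmaL - \sigma^\dag\|^2 \;\le\; \tfrac{\delta^2}{2} + \lambda\|w\|\delta + \tfrac{\lambda^2\|w\|^2}{2}.
\]
The hypothesis $2\gamma\|w\|\le\rho$ is exactly what is needed to keep the coefficient on the left non-negative (with room to extract a strictly positive multiple of $\|\sigmaL-\sigma^\dag\|^2$ by splitting the strong-convexity constant). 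Dividing through and optimising over $\lambda$ --- specifically $\lambda\sim\delta$, which balances the $\delta^2/\lambda$, $\delta$, and $\lambda$ terms on the right --- yields $\|\sigmaL-\sigma^\dag\|^2 = O(\delta)$, whence \eqref{eq:conv_rate}. The qualitative statement $\sigmaL\to\sigma^\dag$ as $\lambda\to 0$ (with $\delta=\delta(\lambda)\to 0$ such that $\delta^2/\lambda\to 0$) is then an immediate corollary of the same bound.

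The main subtlety lies in the absorption step: the nonlinear linearisation remainder must be majorised by a multiple of $\|\sigmaL-\sigma^\dag\|^2$ strictly dominated by the $\tfrac{\lambda\rho}{2}\|\sigmaL-\sigma^\dag\|^2$ produced by the Bregman quadratic lower bound. The smallness condition $2\gamma\|w\|\le\rho$ is calibrated exactly to enable this absorption; it plays the nonlinear counterpart of the small-source-element condition familiar from linear Tikhonov theory, and without it the nonlinear perturbation would overwhelm the coercive term and destroy the rate.
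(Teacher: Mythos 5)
Your proposal is correct and follows essentially the same route as the paper's proof: minimality of $\sigmaL$, the Bregman-distance decomposition with the $\tfrac{\rho}{2}\|\cdot\|^2$ lower bound, insertion of the source condition, splitting off the linearisation remainder controlled by \eqref{eq:nonlin_ass_dag}, and absorption of the cross term into the data-fidelity term (your Young's inequality step is just the paper's completion of the square $\tfrac{1}{2}\|\Phi(\sigmaL)-\Ld+\lambda w\|^2$ written differently). The resulting bound and the balancing $\lambda\sim\delta$ coincide with \eqref{eq:conv_rate_expl}, so no substantive differences to report.
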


\begin{proof}
    Since $\sigmaL$ is a minimizer of \eqref{eq:regularization}, it holds 
    \[
    \frac{1}{2} \| \Phi(\sigmaL) - \Ld \|^2  + \lambda \mathcal{R}(\sigmaL) \leq \frac{1}{2} \| \Phi(\sigma^\dag)- \Ld\|^2 + \lambda \mathcal{R}(\sigma^\dag); 
    \]
    subtracting the term $\lambda \langle p^\dag, \sigmaL - \sigma^\dag\rangle$ on both sides, being $p^\dag$ as in \eqref{eq:SC}, we get
    \begin{equation}
    \label{eq:aux1}
    \frac{1}{2} \| \Phi(\sigmaL) - \Ld \|^2 + \lambda \big(\mathcal{R}(\sigmaL) - \mathcal{R}(\sigma^\dag) - \langle p^\dag, \sigmaL - \sigma^\dag\rangle \big) \leq \frac{1}{2} \|\eta \|^2 - \lambda \langle p^\dag, \sigmaL - \sigma^\dag\rangle.
    \end{equation}
    The second term on the left-hand side of \eqref{eq:aux1} can be interpreted in terms of the Bregman divergence associated with the convex functional $\mathcal{R}$. In particular, it can be denoted as $D_{\mathcal{R}}^{p^\dag}(\sigmaL,\sigma^\dag)$, where for any $\sigma_1,\sigma_2$ the Bregman divergence from $\sigma_2$ to $\sigma_2$, with respect to $p_2 \in \partial\mathcal{R}(\sigma_2)$ is defined as
    \[
    D_{\mathcal{R}}^{p_2}(\sigma_1,\sigma_2) = \mathcal{R}(\sigma_1) - \mathcal{R}(\sigma_2) - \langle p_2, \sigma_1 - \sigma_2 \rangle.
    \]
    Notice that, for the specific choice of $\mathcal{R}$ outlined in \eqref{eq:glob_reg}, it holds that, for $\sigma_1,\sigma_2 \in K$, any $p_2 \in \partial \mathcal{R}$ can be written as $\rho \sigma_2 + q_2$, with $q_2 \in \partial R(\sigma_2)$, and thanks to the convexity of $R$,
    \[
    \begin{aligned}
      D_{\mathcal{R}}^{p_2}(\sigma_1,\sigma_2) &= R(\sigma_1) + \frac{\rho}{2} \|\sigma_1\|^2 - R(\sigma_2) - \frac{\rho}{2} \|\sigma_2\|^2 - \langle \rho \sigma_2 + q_2, \sigma_1-\sigma_2 \rangle \\
       & =\frac{\rho}{2} \left(\|\sigma_1\|^2 - \|\sigma_2\|^2 -2\langle \sigma_2, \sigma_1 -\sigma_2 \rangle \right) + \big( R(\sigma_1) - R(\sigma_2) - \langle q_2, \sigma_1-\sigma_2 \rangle \big) \\
       & \geq \frac{\rho}{2} \| \sigma_1 - \sigma_2 \|^2.
    \end{aligned}
    \]
    Inserting this in \eqref{eq:aux1}, and employing the source condition \eqref{eq:SC}, we obtain
    \begin{equation}
        \label{eq:aux12}
\frac{1}{2} \| \Phi(\sigmaL) - \Ld \|^2 + \frac{\lambda \rho}{2} \| \sigmaL - \sigma^\dag \|^2 \leq \frac{1}{2} \|\eta \|^2 - \lambda \langle w, J_\Phi(\sigma^\dag) (\sigmaL - \sigma^\dag)\rangle.
    \end{equation}
    Let us now focus on the second term on the right-hand side of \eqref{eq:aux12}: adding and subtracting various terms,
    \[
    - \langle w, J_\Phi(\sigma^\dag) (\sigmaL - \sigma^\dag)\rangle = \langle w, \big( \Phi(\sigmaL) - \Phi(\sigma^\dag) -J_\Phi(\sigma^\dag) (\sigmaL - \sigma^\dag) \big) + \big( \Ld - \Phi(\sigmaL) \big) + \big( \Phi(\sigma^\dag) - \Ld \big) \rangle;
    \]
    thus, inserting this in \eqref{eq:aux12},
    \[
    \begin{aligned}
        \frac{1}{2} \| \Phi(\sigmaL) - \Ld \|^2 
        + \lambda \langle w, \Phi(\sigmaL) - \Ld \rangle        
        + \frac{\lambda \rho}{2} \| \sigmaL - \sigma^\dag \|^2 \leq & \ \lambda \langle w, \Phi(\sigmaL) - \Phi(\sigma^\dag) -J_\Phi(\sigma^\dag) (\sigmaL - \sigma^\dag) \rangle \\
        & \ +\frac{1}{2} \|\eta \|^2 + \lambda \langle w, \eta \rangle ,
    \end{aligned}
    \]
    and as a consequence of \eqref{eq:nonlin_ass_dag}
    \[
    \frac{1}{2} \| \Phi(\sigmaL) - \Ld + \lambda w\|^2 + \frac{\lambda \rho}{2} \| \sigmaL - \sigma^\dag\|^2 \leq \lambda \gamma \| w\| \| \sigmaL - \sigma^\dag \|^2 + \| \eta + \lambda w\|^2.
    \]
    Finally, neglecting the first term on the left-hand side, and leveraging the inequality $2\gamma\| w \| \leq \rho$,
    \begin{equation}
        \label{eq:conv_rate_expl}
    \lambda \| \sigmaL - \sigma^\dag\|^2 \leq \frac{4}{\rho - 2 \gamma \| w\|} \big( \delta^2 + \lambda^2 \|w \|^2\big)
    \end{equation}
    and for the choice $\lambda \sim \delta$ we get that $\| \sigmaL - \sigma^\dag\|^2 = O(\delta)$, hence the thesis.
\end{proof}

\begin{rem} \label{rem:verify}
   The functional $\Phi$ of EIT satisfies the assumptions of Proposition \ref{prop:conv_rate}: indeed, the second statement of Theorem \ref{thm:Lechleiter} implies that for every $\sigma \in K_{0,1}$ (and in particular for $\sigma^\dag$) the Jacobian matrix $J_\Phi(\sigma)$ is injective, hence its transpose is surjective, and the source condition \eqref{eq:SC} is verified at any point $p^\dag$. 
    Condition \eqref{eq:nonlin_ass_dag} is moreover verified by $\Phi$ in view of the value inequality and of the Lipschitz continuity of the Jacobian, guaranteed again by Theorem \ref{thm:Lechleiter}. \\
\end{rem}

\section{Proximal-Gradient Method to solve \eqref{eq:regularization}}
\label{sec:PGM}

We now consider a first-order iterative method to solve the minimization problem \eqref{eq:regularization}. To simplify its formulation, we introduce the following notation:
\begin{equation}
    \label{eq:Jfg}
\begin{aligned}
    J_\lambda^\delta(\sigma) = f(\sigma) + \lambda g(\sigma), \qquad f(\sigma) = \frac{1}{2}\| \Phi(\sigma) - \Ld \|^2 + \frac{\lambda \rho}{2} \|\sigma \|^2, \qquad g(\sigma) = R(\sigma) + \upchi_K(\sigma).
\end{aligned}
\end{equation}

In this setup, $f$ is a smooth functional, meaning that it is differentiable with Lipschitz continuous gradient, whereas $g$ is proper, convex, and continuous, but is in general non-differentiable. Therefore, a natural choice to approximate $\sigmaL$ is to rely on a proximal-gradient, or forward-backward, scheme.
This accounts for constructing a sequence $\{\sigma^{(n)}\}$ as follows:
\begin{equation}
    \sigma^{(n+1)} = \prox_{\mu \lambda g}\big(\sigma^{(n)} - \mu \nabla f(\sigma^{(n)})\big),
    \label{eq:PG}
\end{equation}
where $\mu > 0$ is a constant step-size. The convergence properties of such iterates, under suitable limitations on $\mu$ and various assumptions of $f,g$,  are widely studied in the literature. For convex $g$ and smooth $f$, provided that $\mu$ is smaller than the inverse of the Lipschitz constant of $\nabla f$, \cite[Theorem 1.3]{Beck_Teboulle_2009} proves that the cluster point of the iterates of \eqref{eq:PG} satisfies the (necessary) optimality conditions associated with $\mathcal{J}_\lambda^\delta$, also providing a convergence rate of an involved quantity. 

In order to prove the convergence of the sequence of the iterates (to a minimizer of $\mathcal{J}_\lambda^\delta$) it is nevertheless necessary to require the convexity of $f$ (see, e.g., \cite[Theorem 1.2]{Beck_Teboulle_2009} or \cite[Section 28.5]{bauschke2017correction}). Unfortunately, in our case this property is only guaranteed for large values of $\rho$: in particular, due to the nonlinearity of $\Phi$, the term $\frac{1}{2}\| \Phi(\sigma) - \Ld\|^2$ is not convex, even though it can be proved to be weakly convex. For the sake of completeness, we briefly discuss two results related to the convexity of similar functionals in the context of EIT. The first one, \cite[Theorem 4.9]{Kindermann2021OnTT}, shows that, in the continuum model of EIT, the squared-norm mismatch functional is convex when restricted to a suitable subset of $\R^\nM$, but cannot guarantee that the iterates generated by an iterative scheme analogous to \eqref{eq:PG} belong to such a set. The second one, \cite[Lemma 4.7]{harrach2023calderon}, proves that, in the continuum model and in the presence of finitely many measurements, the forward operator $F_m$ (substituting $\Phi$ in $f$) is convex with respect to the (semidefinite) Loewner of semidefinite positive operators. Unfortunately, this does not translate into the convexity of the combination of such a functional with the squared Frobenius norm, which would eventually lead to a formulation analogous to our quadratic mismatch term; as an alternative, \cite{harrach2023calderon} relies on an algorithm based on the minimization of a linear functional on a suitable convex set.

An alternative approach to the study of the convergence of first-order methods relies on Kurdyka-Łojasiewicz (KL) conditions, which are less restrictive than convexity: see, for example, \cite{attouch2013convergence}. The results in \cite[Section 3.2]{hurault2023relaxed} provide the convergence rates for the function values and the difference of the iterates under weaker assumptions with respect to the ones we consider here (smoothness of $f$, weak-convexity of $g$), but the convergence of the iterates (to a stationary point of $\mathcal{J}_\lambda^\delta$, due to the lack of convexity) is ensured only if the KL condition is satisfied, see also \cite[Theorem 5.1]{attouch2013convergence}. 
The verification of the KL conditions in the case of EIT is an open problem, and the local injectivity property analyzed in the current dissertation does not prove to be useful: by contrast, the surjectivity of the Jacobian $J_\Phi$ would be beneficial (see \cite[Theorem 3.2]{li2018calculus}).

We prove a different result on the sequence $\sigma^{(n)}$, which takes advantage of the local injectivity of $\Phi$. This is an adaptation of Theorem 2 in \cite{blumensath2013compressed}, which is not formulated for a proximal-gradient scheme but for Iterative Hard Thresholding (essentially equivalent to a projected gradient algorithm).

We start by substituting the definition of $f$ in \eqref{eq:PG}, obtaining the following expression of the iterates of PGM:
\begin{equation}
    \sigma^{(n+1)} = \prox_{\mu \lambda g}\big(\sigma^{(n)} - \mu \lambda \rho \sigma^{(n)} + \mu J_\Phi(\sigma^{(n)})^T(\Ld - \Phi(\sigma^{(n)}))\big),
    \label{eq:PG-Phi}
\end{equation}
with $\mu >0$, where $J_\Phi(\sigma)^T \in \R^{\nt \times \nM}$ denotes the transpose of the Jacobian matrix of $\Phi$ at $\sigma \in \R^\nt$.
The following result characterizes the cluster point $\overline{\sigma}$ of the iterates produced by \eqref{eq:PG-Phi}. In particular, it provides a bound for the distance between $\overline{\sigma}$ and any $\sigma \in K$ only in terms of the value of the functional $\mathcal{J}_\lambda$ evaluated in such a $\sigma$. This result can be effectively employed to discuss the asymptotic properties of $\overline{\sigma}$ as $\delta \rightarrow 0$, as discussed in Corollary \ref{cor:1}, but may also help in motivating the selection of the space $K$, as observed in Corollary \ref{cor:2} and further explored in Section \ref{sec:sec6}. Notice that we do not need to specify the expression of the functional $g$, which will be the object of Section \ref{sec:sec5}.

\begin{theorem}
\label{thm:blumensath}
Let $J_{\lambda}^\delta(\sigma) = f(\sigma) + \lambda g(\sigma)$, being $f$ as in \eqref{eq:Jfg} and $g$ any non-negative, coercive, continuous, and convex functional, whose domain is contained in $K_{0,1}$. Assume that $\Phi$ satisfies:
    \begin{enumerate}
        \item Restricted Isometry Property: there exists $0<\alpha \leq \beta$ such that
        \begin{equation}
            \alpha \| \sigma_1 - \sigma_2 \|^2 \leq \| J_\Phi(\sigma) (\sigma_1 - \sigma_2) \|^2 \leq  \beta \| \sigma_1 - \sigma_2 \|^2 \quad \forall \sigma \in K_{0,1}, \quad \forall \sigma_1,\sigma_2 \in \R^\nt
            \label{eq:RIP}
        \end{equation}
        \item Mild non-linearity: there exists $\gamma > 0$ such that
        \begin{equation}
            \| \Phi(\sigma_1) - \Phi(\sigma_2) - J_{\Phi}(\sigma_2)(\sigma_1 - \sigma_2) \|^2 \leq \gamma \| \sigma_1 - \sigma_2 \|^2 \qquad \forall \sigma_1,\sigma_2 \in K_{0,1}.
            \label{eq:nonlin_ass}
        \end{equation}
    \end{enumerate}
    Require moreover that $\alpha, \beta, \gamma, \lambda,\rho$ and $\mu$ satisfy 
    \begin{equation}
    \mu \leq \frac{1}{2\beta}, \qquad \mu \leq \frac{1}{2 \lambda \rho}, \qquad 0 < \alpha + \lambda \rho -2\gamma.
        \label{eq:parameters}
    \end{equation} 
    Then, the sequence $\sigma^{(n)}$ defined in \eqref{eq:PG-Phi} converges to a cluster point $\overline{\sigma}$ such that, for any $\sigma \in \operatorname{dom}(g)$
    \begin{equation}
    \| \sigma - \overline{\sigma} \|^2 \leq \frac{4}{\alpha + \lambda\rho - 2\gamma} \mathcal{J}_\lambda(\sigma).
        \label{eq:convergence}
    \end{equation} 
\end{theorem}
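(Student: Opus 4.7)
My plan is to adapt the proof of \cite[Theorem 2]{blumensath2013compressed}, originally devised for the hard-thresholding step, to the proximal-gradient iteration \eqref{eq:PG-Phi}. Since $\sigma^{(n+1)}$ minimizes $y \mapsto \mu\lambda g(y) + \tfrac{1}{2}\|y - (\sigma^{(n)} - \mu\nabla f(\sigma^{(n)}))\|^2$, expanding the squares and cancelling the common $\mu^2\|\nabla f(\sigma^{(n)})\|^2$ produces the fundamental prox-inequality, valid for every $\sigma \in \operatorname{dom}(g)$:
\[
\lambda g(\sigma^{(n+1)}) + \tfrac{1}{2\mu}\|\sigma^{(n+1)} - \sigma^{(n)}\|^2 + \langle \nabla f(\sigma^{(n)}), \sigma^{(n+1)} - \sigma \rangle \leq \lambda g(\sigma) + \tfrac{1}{2\mu}\|\sigma - \sigma^{(n)}\|^2.
\]
I would use this twice: with $\sigma = \sigma^{(n)}$ to obtain a sufficient-descent-type bound, and with an arbitrary $\sigma$ to obtain the quantitative estimate \eqref{eq:convergence}.

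\emph{Step 1 (descent and cluster point).} Plugging $\sigma = \sigma^{(n)}$ and bounding $f(\sigma^{(n+1)}) - f(\sigma^{(n)})$ via the algebraic identity below together with the RIP upper bound $\beta$ and the mild-nonlinearity constant $\gamma$ would lead to an inequality of the form $\mathcal{J}_\lambda(\sigma^{(n+1)}) \leq \mathcal{J}_\lambda(\sigma^{(n)}) - c\|\sigma^{(n+1)} - \sigma^{(n)}\|^2$ for some $c > 0$; the positivity of $c$ is exactly what the requirement $\mu \leq 1/(2\beta)$ buys. Together with the coercivity of $\mathcal{J}_\lambda$ on the compact set $K_{0,1}$, this gives boundedness of the iterates, summability of $\sum_n\|\sigma^{(n+1)} - \sigma^{(n)}\|^2$, and the existence of at least one cluster point $\overline{\sigma}$.

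\emph{Step 2 (comparison with an arbitrary target).} The key algebraic ingredient is the identity
\[
f(\tau) - f(\sigma^{(n)}) - \langle \nabla f(\sigma^{(n)}), \tau - \sigma^{(n)}\rangle = \tfrac{1}{2}\|\Phi(\tau) - \Phi(\sigma^{(n)})\|^2 + \tfrac{\lambda\rho}{2}\|\tau - \sigma^{(n)}\|^2 + \langle \Phi(\sigma^{(n)}) - \Lambda^\delta, e_\tau\rangle,
\]
where $e_\tau = \Phi(\tau) - \Phi(\sigma^{(n)}) - J_\Phi(\sigma^{(n)})(\tau - \sigma^{(n)})$, obtained by simply expanding $\tfrac{1}{2}\|\Phi(\tau) - \Lambda^\delta\|^2$ around $\sigma^{(n)}$. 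Applying this with $\tau = \sigma^{(n+1)}$ and $\tau = \sigma$ and subtracting, I would rewrite the scalar product $\langle \nabla f(\sigma^{(n)}), \sigma^{(n+1)} - \sigma\rangle$ in the prox-inequality in terms of $\mathcal{J}_\lambda(\sigma^{(n+1)}) - \mathcal{J}_\lambda(\sigma)$ plus $\Phi$-quadratic residuals. Then, invoking the RIP lower bound $\|J_\Phi(\sigma^{(n)})(\sigma - \sigma^{(n)})\|^2 \geq \alpha\|\sigma - \sigma^{(n)}\|^2$, the mild-nonlinearity estimate $\|e_\tau\|^2 \leq \gamma\|\tau - \sigma^{(n)}\|^2$, and well-chosen Young inequalities to absorb the mixed terms, I expect to obtain
\[
\mathcal{J}_\lambda(\sigma^{(n+1)}) + \tfrac{\alpha + \lambda\rho - 2\gamma}{4}\|\sigma^{(n+1)} - \sigma\|^2 \leq \mathcal{J}_\lambda(\sigma) + \mathcal{E}_n,
\]
where the remainder $\mathcal{E}_n$ is dominated by multiples of $\|\sigma^{(n+1)} - \sigma^{(n)}\|^2$ and therefore vanishes by Step 1. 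Passing to the limit along a subsequence converging to $\overline{\sigma}$, dropping the nonnegative term $\mathcal{J}_\lambda(\overline{\sigma})$ on the left, and invoking continuity of $\mathcal{J}_\lambda$ yields \eqref{eq:convergence}; a standard compactness-plus-monotonicity argument then promotes subsequential convergence to full-sequence convergence.

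The hard part, in my view, is the bookkeeping of Step 2: the Young splittings must be tuned so that the coefficient of $\|\sigma^{(n+1)} - \sigma\|^2$ on the left collapses \emph{exactly} to $(\alpha + \lambda\rho - 2\gamma)/4$, while the cross terms of the form $\langle \Phi(\sigma^{(n)}) - \Lambda^\delta, e_\tau\rangle$, which feature the possibly-large residual, must be absorbed either into a vanishing remainder or into an appropriately-signed left-hand side contribution. The two parameter restrictions \eqref{eq:parameters} are what supplies the needed room: $\mu \leq 1/(2\beta)$ powers the descent of Step 1, while $\mu \leq 1/(2\lambda\rho)$ guarantees that the coefficient $1/(2\mu) - \lambda\rho/2$ arising from the strong-convexity contribution of the $\lambda\rho\|\sigma\|^2/2$ term in $f$ remains nonnegative and can be discarded on the left-hand side of Step 2.
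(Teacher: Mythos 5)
Your plan is structurally different from the paper's proof, and the difference matters. The paper never establishes a sufficient-decrease property, never shows $\|\sigma^{(n+1)}-\sigma^{(n)}\|\to 0$, and never compares function values: it keeps $\|\sigma^{(n+1)}-\sigma\|^2$ itself as the tracked quantity, uses the prox optimality condition (retaining the strong-convexity term of the prox objective) to get $\|\sigma^{(n+1)}-\sigma\|^2 \le \|\sigma-\sigma^{(n)}+\mu\lambda\rho\,\sigma^{(n)}-\mu J_\Phi(\sigma^{(n)})^T(\Lambda^\delta-\Phi(\sigma^{(n)}))\|^2 + 2\mu\lambda g(\sigma)$, expands the square term by term using \eqref{eq:RIP} and \eqref{eq:nonlin_ass}, and lands on the one-step contraction $\|\sigma^{(n+1)}-\sigma\|^2 \le q\,\|\sigma^{(n)}-\sigma\|^2 + 4\mu\,\mathcal{J}_\lambda^\delta(\sigma)$ with $q = 1-\mu(\alpha+\lambda\rho-2\gamma)<1$, which iterates geometrically to \eqref{eq:convergence}. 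Your route replaces this with ``descent $+$ asymptotic function-value comparison,'' and as written it has two genuine gaps.

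First, Step 1 does not follow from the stated hypotheses. Assumption \eqref{eq:nonlin_ass} only yields $\|e_\tau\| \le \sqrt{\gamma}\,\|\tau-\sigma^{(n)}\|$, a \emph{first-order} control of the Taylor remainder (unlike the $O(\|\tau-\sigma^{(n)}\|^2)$ one would get from a Lipschitz Jacobian). Consequently, in your own identity the cross term $\langle \Phi(\sigma^{(n)})-\Lambda^\delta, e_{\sigma^{(n+1)}}\rangle$ is only $O(\|\sigma^{(n+1)}-\sigma^{(n)}\|)$, not $O(\|\sigma^{(n+1)}-\sigma^{(n)}\|^2)$, so the descent-lemma upper bound needed for $\mathcal{J}_\lambda(\sigma^{(n+1)}) \le \mathcal{J}_\lambda(\sigma^{(n)}) - c\|\sigma^{(n+1)}-\sigma^{(n)}\|^2$ is not available; $\mu\le 1/(2\beta)$ does not ``buy'' it. Without it you get neither summability of the squared increments nor $\|\sigma^{(n+1)}-\sigma^{(n)}\|\to 0$, and every ``$\mathcal{E}_n\to 0$'' claim in Step 2 collapses. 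Second, even granting Step 1, your fundamental prox-inequality omits the term $\tfrac{1}{2\mu}\|\sigma-\sigma^{(n+1)}\|^2$ on the left (the prox objective is $1$-strongly convex). Without it, the right-hand side retains $\tfrac{1}{2\mu}\|\sigma-\sigma^{(n)}\|^2$ with $\tfrac{1}{2\mu}\ge\beta$, while the only compensating contributions on the left, coming from $Q(\sigma)=\tfrac12\|\Phi(\sigma)-\Phi(\sigma^{(n)})\|^2+\tfrac{\lambda\rho}{2}\|\sigma-\sigma^{(n)}\|^2+\langle\Phi(\sigma^{(n)})-\Lambda^\delta,e_\sigma\rangle$, can be bounded below by at most $\tfrac{\alpha+\lambda\rho}{2}\|\sigma-\sigma^{(n)}\|^2$ minus $\gamma$-terms; since $\beta\ge\alpha$, a strictly positive multiple of $\|\sigma-\sigma^{(n)}\|^2\to\|\sigma-\overline{\sigma}\|^2$ survives on the wrong side and the final inequality cannot close. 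Keeping the strong-convexity term repairs this (the telescoped difference $\tfrac{1}{2\mu}(\|\sigma-\sigma^{(n)}\|^2-\|\sigma-\sigma^{(n+1)}\|^2)$ is then $O(\|\sigma^{(n+1)}-\sigma^{(n)}\|)$), but only conditionally on Step 1, and even then the bookkeeping naturally produces a coefficient of the form $\tfrac{\alpha}{4}+\tfrac{\lambda\rho}{2}-\gamma$ (with yet another loss from reabsorbing $\|\Phi(\sigma^{(n)})-\Lambda^\delta\|^2$ into $\mathcal{J}_\lambda(\sigma^{(n+1)})$), not the stated $\tfrac{\alpha+\lambda\rho-2\gamma}{4}$. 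I would recommend abandoning the descent scaffolding and running the paper's direct contraction argument instead.
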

The proof of Theorem \ref{thm:blumensath} is provided in Appendix A.

Let us here briefly discuss the introduced assumptions.
\begin{enumerate}
    \item The hypotheses on the functional $f$ are verified when the operator $\Phi$ is the one of EIT, for a sufficiently large number of electrodes $p$ and small discretization size $h$. Indeed, the local injectivity  of $F'_{p,h}$ and the Lipschitz continuity and differentiability of $F_{p,h}$ on $\Vt$ in Theorem \ref{thm:Lechleiter} ensure the lower and upper bounds in \eqref{eq:RIP}, whereas \eqref{eq:nonlin_ass} is guaranteed by the Lipschitz continuity of $F'_{p,h}$.
    \item The requirement $0 < \alpha + \lambda \rho -2\gamma$ in \eqref{eq:parameters} imposes an important connection between the ill-conditioning of $J_\Phi$ (expressed by $\alpha$, as $\| J_\Phi(\sigma)\|_2^2 \ge \alpha$ from \eqref{eq:RIP}) and the non-linearity of $\Phi$ (encoded in $\gamma$). It is in general very difficult to check if the stronger condition $2 \gamma < \alpha$ is verified in the case of EIT; nevertheless, we can leverage the presence of the regularization parameters $\lambda \rho$. Despite this being restrictive in an asymptotic scenario as $\lambda \rightarrow 0$, it is possible to verify that the requirements in \eqref{eq:parameters} are less restrictive than the ones entailed by the original computations contained in \cite{blumensath2013compressed}, which would imply a condition like $\beta + \lambda \rho < \frac{3}{2}(\alpha + \lambda \rho) - 4\gamma$.
\end{enumerate}

As a corollary of Proposition \ref{prop:conv_rate} and of Theorem \ref{thm:blumensath}, we deduce the following convergence rate for the error between the cluster point $\overline{\sigma}$ of \eqref{eq:PG-Phi} and the solution of the inverse problem.
\begin{cor} \label{cor:1}
Let $\overline{\sigma}$ be the cluster point of the iterates \eqref{eq:PG-Phi} and $\sigma^\dag$ the solution of the inverse problem \eqref{eq:cs}. Suppose the assumptions of Proposition \ref{prop:conv_rate} and of Theorem \ref{thm:blumensath} are verified, and that moreover the last expression in \eqref{eq:parameters} is replaced by $2\gamma < \alpha$. Then, for sufficiently small $\delta$, under the choice $\lambda = C\delta$, there exists a constant $c$ (depending on $\alpha,\gamma,\rho, \sigma^\dag$ and $C$) such that
\begin{equation}
\| \overline{\sigma} - \sigma^\dag\| \leq c \sqrt{\delta}
    \label{eq:overall_rate}
\end{equation}
\end{cor}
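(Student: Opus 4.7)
The plan is to control $\|\overline{\sigma}-\sigma^\dag\|$ by inserting the regularized minimizer $\sigmaL$ of \eqref{eq:regularization} as an intermediate point and invoking the two results in sequence via the triangle inequality
\[
\|\overline{\sigma}-\sigma^\dag\| \leq \|\overline{\sigma}-\sigmaL\| + \|\sigmaL - \sigma^\dag\|,
\]
so that the two terms on the right are handled by Theorem \ref{thm:blumensath} and Proposition \ref{prop:conv_rate} respectively.

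For the second term, we directly apply Proposition \ref{prop:conv_rate} under the prescribed choice $\lambda = C\delta$, which yields $\|\sigmaL - \sigma^\dag\| = O(\sqrt{\delta})$ with a multiplicative constant depending on $\rho$, $\gamma$, $\|w\|$ and $C$ as made explicit in \eqref{eq:conv_rate_expl}. For the first term, I would apply the bound \eqref{eq:convergence} of Theorem \ref{thm:blumensath} at the point $\sigma = \sigmaL$, which is legitimate since $\sigmaL \in K = \operatorname{dom}(g)$ by construction. This gives
\[
\|\overline{\sigma}-\sigmaL\|^2 \leq \frac{4}{\alpha+\lambda\rho-2\gamma}\,\mathcal{J}_\lambda^\delta(\sigmaL).
\]
I then estimate $\mathcal{J}_\lambda^\delta(\sigmaL)$ from above using minimality against $\sigma^\dag$:
\[
\mathcal{J}_\lambda^\delta(\sigmaL) \leq \mathcal{J}_\lambda^\delta(\sigma^\dag) = \tfrac{1}{2}\|\eta\|^2 + \lambda\,\mathcal{R}(\sigma^\dag) \leq \tfrac{1}{2}\delta^2 + C\delta\,\mathcal{R}(\sigma^\dag),
\]
which is $O(\delta)$ for small $\delta$ under $\lambda = C\delta$. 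The strengthened condition $2\gamma<\alpha$ in place of the last inequality in \eqref{eq:parameters} guarantees that $\alpha+\lambda\rho-2\gamma \geq \alpha-2\gamma>0$ uniformly in $\delta$, so the prefactor is bounded independently of $\lambda$. Combining these estimates yields $\|\overline{\sigma}-\sigmaL\|^2 = O(\delta)$, i.e.\ $\|\overline{\sigma}-\sigmaL\| = O(\sqrt{\delta})$.

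The triangle inequality then produces \eqref{eq:overall_rate}, with the constant $c$ collecting $\alpha$, $\gamma$, $\rho$, $C$, $\|w\|$ and $\mathcal{R}(\sigma^\dag)$. The only non-routine point I foresee is bookkeeping the compatibility of the two parameter conditions $2\gamma\|w\|\leq \rho$ (required by Proposition \ref{prop:conv_rate}) and $2\gamma<\alpha$ (the strengthening in the corollary), which are independent and must both be assumed in force, together with the step-size constraints \eqref{eq:parameters}; this is why the smallness-of-$\delta$ hypothesis is harmless, since the $\lambda\rho$ term in the denominator of Theorem \ref{thm:blumensath} is no longer needed once $\alpha-2\gamma>0$.
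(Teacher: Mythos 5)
Your proposal is correct and follows essentially the same route as the paper's proof: insert the regularized minimizer $\sigmaL$ as an intermediate point, bound $\|\overline{\sigma}-\sigmaL\|$ via \eqref{eq:convergence} evaluated at $\sigmaL$ together with the minimality bound $\mathcal{J}_\lambda^\delta(\sigmaL)\leq\mathcal{J}_\lambda^\delta(\sigma^\dag)=O(\delta)$, bound $\|\sigmaL-\sigma^\dag\|$ via Proposition \ref{prop:conv_rate}, and use $2\gamma<\alpha$ to keep the prefactor uniform in $\lambda$. The only cosmetic difference is that the paper works with squared norms (factor $2$ in the triangle inequality) to obtain the explicit constant \eqref{eq:overall_rate_expl}.
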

\begin{proof}  
Substituting $\sigmaL$ in \eqref{eq:convergence} and leveraging the explicit expression of the convergence rate \eqref{eq:conv_rate_expl},
\[
\begin{aligned}
         \| \overline{\sigma}  - \sigma^\dag \|^2 &\leq 2\| \overline{\sigma} - \sigmaL \|^2 + 2 \| \sigmaL - \sigma^\dag \|^2 \\
         & \leq \frac{8}{\alpha + \lambda \rho - 2\gamma} \mathcal{J}_\lambda^\delta(\sigmaL) + \frac{8}{\lambda(\rho - 2\gamma \| w\|)}(\delta^2 + \lambda^2 \| w\|^2).
\end{aligned}
\]
Now, since by definition $\mathcal{J}_\lambda^\delta(\sigmaL) \leq \mathcal{J}_\lambda^\delta(\sigma^\dag)$,
\[
\begin{aligned}
         \| \overline{\sigma}  - \sigma^\dag \|^2
         & \leq \frac{8}{\alpha + \lambda \rho - 2\gamma} \left(\frac{\delta^2}{2} + \frac{\lambda \rho}{2}\| \sigma^\dag\|^2 + \lambda g(\sigma^\dag) \right) + \frac{8}{\lambda (\rho - 2\gamma \| w\|)}(\delta^2 + \lambda^2 \| w\|^2).
\end{aligned}
\]
Exploiting now the fact that $\alpha - 2\gamma>0$ (and $\lambda \rho >0$), under the choice $\lambda = C \delta$, we recover the desired estimate for $\delta<1$:
\begin{equation}
\| \overline{\sigma} - \sigma^\dag\|^2 \leq \left(\frac{4 + 4C\rho + 8 g(\sigma^\dag)}{\alpha - 2 \gamma} + \frac{8(1 + C^2\| w \|^2)}{C(\rho - 2\gamma \| w\|)}\right) \delta
    \label{eq:overall_rate_expl}
\end{equation}
\end{proof}

The next corollary is instead of prominent importance when considering the choice of the regularization functional $g$. Let us focus on the model expressed in \eqref{eq:Jfg}, assuming that $g$ is the sum of a non-negative, convex, and continuous functional $R$ (whose domain is, without loss of generality, $\R^\nt$), and the characteristic function $\upchi_K$ of the compact, convex set $K\subset K_{0,1}$. In particular, the following result discussed the importance of an educated choice of the set $K$, determining the domain of $g$. 

\begin{cor} \label{cor:2}
Suppose the assumptions of Theorem \ref{thm:blumensath} are verified, and that the last expression in \eqref{eq:parameters} is replaced by $2\gamma < \alpha$. Let $\sigma_K$ be the orthogonal projection of the exact solution $\sigma^\dag$ onto $K$, i.e.,
\begin{equation}
\sigma_K = \operatorname{proj}_K(\sigma^\dag) = \argmin_{\sigma \in K}\big\{\| \sigma^\dag - \sigma\|\big\}.
    \label{eq:sigmaK}
\end{equation}
Then, for sufficiently small $\delta$, under the choice $\lambda = C\delta$, there exist two constants $c_1,c_2$ (depending on $\alpha,\gamma,\rho,g$ and $C$) such that
\begin{equation}
\| \overline{\sigma} - \sigma^\dag\| \leq c_1 \sqrt{\delta} + c_2 \| \sigma_K -\sigma^\dag\|.
    \label{eq:motivation_oracle}
\end{equation}
\end{cor}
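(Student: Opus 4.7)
The plan is to mirror the proof of Corollary~\ref{cor:1}, but accommodate the fact that $\sigma^\dag$ need not lie in the domain of $g$: since $\operatorname{dom}(g) = K$, we cannot substitute $\sigma = \sigma^\dag$ directly into \eqref{eq:convergence}, and the quantity $\|\sigma_K - \sigma^\dag\|$ precisely measures this discrepancy (quantifying, in the application, the imperfection of the Oracle). Accordingly, I would apply Theorem~\ref{thm:blumensath} with $\sigma = \sigma_K \in K = \operatorname{dom}(g)$ and then close via the triangle inequality $\|\overline{\sigma} - \sigma^\dag\| \leq \|\overline{\sigma} - \sigma_K\| + \|\sigma_K - \sigma^\dag\|$.

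The first step is to estimate $\mathcal{J}_\lambda^\delta(\sigma_K) = \frac{1}{2}\|\Phi(\sigma_K) - \Ld\|^2 + \frac{\lambda\rho}{2}\|\sigma_K\|^2 + \lambda R(\sigma_K)$, using $\upchi_K(\sigma_K) = 0$. Splitting the residual as $\Phi(\sigma_K) - \Phi(\sigma^\dag) - \eta$ and applying $(a+b)^2 \leq 2a^2 + 2b^2$ yields a bound in terms of $\|\Phi(\sigma_K) - \Phi(\sigma^\dag)\|^2$ and $\delta^2$. The nonlinear term is then controlled by combining the mild nonlinearity \eqref{eq:nonlin_ass} at the base point $\sigma^\dag$ with the RIP upper bound \eqref{eq:RIP}:
\[
\|\Phi(\sigma_K) - \Phi(\sigma^\dag)\| \leq \|J_\Phi(\sigma^\dag)(\sigma_K - \sigma^\dag)\| + \sqrt{\gamma}\,\|\sigma_K - \sigma^\dag\| \leq (\sqrt{\beta} + \sqrt{\gamma})\,\|\sigma_K - \sigma^\dag\|.
\]
Since $K \subset K_{0,1}$ is compact and $R$ is continuous, the quantities $\|\sigma_K\|$ and $R(\sigma_K)$ are uniformly bounded by constants depending only on $K_{0,1}$ and $R$.

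Next, Theorem~\ref{thm:blumensath} (with the strengthened assumption $2\gamma < \alpha$, so that $\alpha + \lambda\rho - 2\gamma \geq \alpha - 2\gamma > 0$ uniformly in $\lambda$) gives
\[
\|\overline{\sigma} - \sigma_K\|^2 \leq \frac{4}{\alpha - 2\gamma}\,\mathcal{J}_\lambda^\delta(\sigma_K) \leq A\,\|\sigma_K - \sigma^\dag\|^2 + (B + D\,\delta)\,\delta
\]
under the choice $\lambda = C\delta$, where $A, B, D$ depend on $\alpha,\beta,\gamma,\rho,C$ and the uniform bounds on $\|\sigma_K\|$ and $R(\sigma_K)$. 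For $\delta \leq 1$ the bracket is dominated by a constant multiple of $\delta$, and the elementary inequality $\sqrt{x+y} \leq \sqrt{x} + \sqrt{y}$ produces $\|\overline{\sigma} - \sigma_K\| \leq c_1\sqrt{\delta} + c_1'\,\|\sigma_K - \sigma^\dag\|$. Combining with the triangle inequality then yields \eqref{eq:motivation_oracle} with $c_2 = c_1' + 1$.

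The step requiring most care is the control of $\|\Phi(\sigma_K) - \Phi(\sigma^\dag)\|$: unlike in Proposition~\ref{prop:conv_rate}, the source condition is not assumed at $\sigma_K$, so its Bregman-divergence argument is unavailable. Routing the estimate through RIP plus mild nonlinearity sidesteps this obstruction, at the cost of an $O(\|\sigma_K - \sigma^\dag\|)$ rather than $O(\sqrt{\|\sigma_K - \sigma^\dag\|})$ contribution, which is precisely the form demanded by \eqref{eq:motivation_oracle}.
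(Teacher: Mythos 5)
Your proposal is correct and follows essentially the same route as the paper: apply the bound \eqref{eq:convergence} at $\sigma = \sigma_K \in \operatorname{dom}(g)$, estimate $\mathcal{J}_\lambda^\delta(\sigma_K)$ by splitting off the noise and the term $\|\Phi(\sigma_K)-\Phi(\sigma^\dag)\|^2$, bound $\|\sigma_K\|$ and $R(\sigma_K)$ uniformly via compactness of $K_{0,1}$, and close with the triangle inequality under $\lambda = C\delta$, $\delta<1$, $\alpha-2\gamma>0$. The only cosmetic difference is that you obtain the Lipschitz estimate for $\Phi$ by combining the mild nonlinearity \eqref{eq:nonlin_ass} with the RIP upper bound (constant $(\sqrt{\beta}+\sqrt{\gamma})^2$), whereas the paper uses the mean-value inequality with \eqref{eq:RIP} directly (constant $\beta$); both are valid and affect only the value of $c_2$.
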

\begin{proof}
    We employ the inequality \eqref{eq:convergence} replacing $\sigma$ by $\sigma_K$. Then, it holds
\[
\begin{aligned}
\| \overline{\sigma} - \sigma^\dag\|^2 &\leq 2 \| \overline{\sigma} - \sigma_K \|^2  + 2 \| \sigma_K - \sigma^\dag\|^2 \\
& \leq \frac{8}{\alpha + \lambda \rho - 2\gamma} \mathcal{J}_{\lambda}^\delta(\sigma_K) + 2 \| \sigma_K - \sigma^\dag \|^2 \\
& \leq \frac{8}{\alpha + \lambda \rho - 2\gamma} \left( \delta^2 + \| \Phi(\sigma_K) - \Phi(\sigma^\dag) \|^2 + \frac{\lambda \rho}{2}\| \sigma_K\|^2 + \lambda g(\sigma_K) \right) + 2 \| \sigma_K - \sigma^\dag \|^2
\end{aligned}
\]
We now recall that the map $\Phi$ is Lipschitz-continuous with a constant smaller than $\beta$, due to \eqref{eq:RIP} and to the mean-value inequality. Moreover, since $K$ is contained within the compact set $K_{0,1}$, the quantities $\| \sigma_K \|^2$ and $g(\sigma_K) = R(\sigma_K)$ can be bounded by the constants $\Gamma_1$ and $\Gamma_2$ respectively, both independent of $K$. Thus, for $\lambda =C \delta$ and $\delta<1$, also employing that $\alpha - 2\gamma>0$, we get
\begin{equation}
\| \overline{\sigma} - \sigma^\dag\|^2 \leq \frac{8 + 4 C\rho \Gamma_1 + \Gamma_2}{\alpha - 2 \gamma} \ \delta + \left( \frac{8 \beta}{\alpha - 2 \gamma} +2 \right)\| \sigma_K - \sigma^\dag\|^2.
    \label{eq:motivation_oracle_expl}
\end{equation}
\end{proof}

When comparing \eqref{eq:overall_rate} and \eqref{eq:motivation_oracle}, one observes that, although the latter bound does not guarantee the asymptotic accuracy of $\overline{\sigma}$ as $\delta \rightarrow 0$, it clearly outlines its dependence on the choice of $K$. In particular, for a fixed $\delta$, \eqref{eq:motivation_oracle} shows that the error might be reduced if $K$ is chosen such that $\|\sigma_K -\sigma^\dag\|$ is extremely small, or even $0$. \\
If the solution $\sigma^\dag$ is known to be $s-$sparse (i.e., to have at most $s$ non-vanishing components), one possible way to leverage this information (see e.g. \cite{blumensath2013compressed}) is by considering $K$ as the union of all the $s-$dimensional coordinate hyperplanes in $\R^\nt$. The projection onto such a space can be easily computed (by selecting the $s$ largest components of a vector, as in the Iterative Hard Thresholding algorithm in \cite{blumensath2013compressed}), but an overestimation of the sparsity level $s$ may still lead to an inefficient reconstruction. \\
In our case, we wish to select $K$ in a more insightful way. In particular, suppose that an \textit{Oracle} function is available, taking as an input the measurements $\Lambda^\delta$ and returning the exact support of $\sigma^\dag$. Employing such an output as the set $K$ in algorithm \eqref{eq:PG-Phi} would allow canceling the last term on the right-hand side of \eqref{eq:motivation_oracle}, thus obtaining better estimates. The quest for such an Oracle functional can also be interpreted as the estimation of the \textit{support}, or the \textit{sparsity pattern} of the solution, a problem that has been extensively studied in the context of compressed sensing (see, e.g., \cite{fletcher2009necessary,meinshausen2006high,wainwright2006sharp}). Our approach, discussed in Section \ref{sec:sec6}, relies instead on statistical learning techniques, and in particular based on Graph Neural Networks, to provide a data-driven approximation of the optimal Oracle functional.

\section{Some explicit regularization models}
\label{sec:sec5}
We now want to study more closely some choices of $R$ and $K$ in \eqref{eq:regularization}, which will also reflect in more explicit expressions for the proximal operator of $g$ appearing in \eqref{eq:PG-Phi} and defined in \eqref{eq:Jfg}.

\par
In particular, we consider the following possible expressions for $R$:
\begin{enumerate}[(1)]  
    \item Sparsity-promoting regularization through $\ell^1$ norm:
    \begin{equation}
    R(\sigma) = \| \sigma - \sigma_0 \|_1, \qquad \sigma_0 \in [c_0,c_1] \subset \R^\nt.
        \label{eq:R_ell1}
    \end{equation}
    This reflects the assumption that $\sigma$ differs from a known reference conductivity $\sigma_0$ only in a few components. The proximal map in this case reads as follows:
    \[    \sigma^*=\prox_{\lambda \mu R}(\sigma) = \sigma_0 + S_{\lambda \mu}(\sigma-\sigma_0),
    \]
    where $S_t$ is the element-wise soft-thresholding function, namely $S_t\colon \R^\nt \rightarrow \R^\nt$ such that, for $i = 1,\ldots,\nt$,
    \[
    [S_t(\nu)]_i = \operatorname{sign}(\nu_i) \max(0,|\nu_i|-t).
    \]
    
    \item Anisotropic Total Variation (TV) on meshes (see \cite{colibazzi2022learning}):
    \begin{equation} \label{eq:R_TV}
        R(\sigma) = \operatorname{TV}(\sigma) = \sum_{i=1}^n \sum_{k \in \mathcal{N}_i} w_{ik} |\sigma_i - \sigma_k|, 
    \end{equation}
    where, for each $i$, $\mathcal{N}_i$ denotes the set of indices $k \in \{1,\ldots,\mbox{deg}(i)\}$, with  $\mbox{deg}(i)$ denoting the valence of vertex $i$th. 
    The positive weight $w_{ik}$ is instead defined as the inverse of the (Euclidean) distance between the $i$-th vertex and its $k$-th adjacent vertex.
     The proximal map associated with this choice     \begin{equation} \sigma^*= \prox_{\lambda \mu R}(\sigma)=
\arg\min_{x \in \R^{n}} \left\{ \frac{1}{2}\|x-\sigma\|_2^2 +\lambda \mu TV(x) \right\}
\label{eq:pgmprox_TV_1}
\end{equation}
can be approximately computed according to a procedure proposed in \cite{Osher2009} and generalized in \cite{colibazzi2022learning} for polygonal meshes. Let $ \sigma^*$ be  the fixed point of the following equations, stated on its components
     \begin{equation}
    \label{eq:prox_TV}
        \sigma^*_i = \argmin_{x \in \R} \left\{ \frac{1}{2} (x-\sigma_i)^2 + \lambda \mu \sum_{k \in \mathcal{N}_i} w_{ik} |x-\sigma^*_k|\right\}.
    \end{equation}

    Following \cite[Theorem 3.2 and Remark 3.1]{Osher2009}, the solution of problem \eqref{eq:prox_TV} can be obtained as the limit of the sub-iterations
    \begin{equation}
    \label{eq:sub_it_TV}
        \sigma^{(l+1)}_i = \argmin_{x \in \R} \left\{ \frac{1}{2} (x-\sigma_i)^2 + \lambda \mu \sum_{k \in \mathcal{N}_i} w_{ik} |x-\sigma^{(l)}_k|\right\},
    \end{equation}
    and for each $i$ and $l$ the unique solution of \eqref{eq:sub_it_TV} can be efficiently computed by the median formula proposed in \cite{Osher2009}
\begin{equation}
\label{eq:closed}
        \sigma^{(l+1)}_i =   
       \mbox{median}\left\{\sigma^{(l)}_1, \ldots, \sigma^{(l)}_n, \sigma_i^{(l)}+ \lambda \mu W_0,
       \sigma_i^{(l)}+ \lambda \mu W_1\ldots, \sigma_i^{(l)}+ \lambda \mu W_n \right\},
\end{equation}
where the values ${\sigma_k}$ of the vertices $k \in \mathcal{N}_i$ are sorted in increasing order, and  
\begin{equation}
    W_j = - \sum_{k=1}^{j} w_k + \sum_{k=j+1}^{n} w_k, \quad \quad j=0,..,n.
\label{eq:Wk}
\end{equation}
A discussion on the convergence to the global anisotropic TV problem \eqref{eq:pgmprox_TV_1} by iterating the local optimization problem \eqref{eq:prox_TV}, is provided in \cite{Osher2009}. 
\end{enumerate}

Regarding $K$ we consider the following cases:
\begin{enumerate}[(i)]
    \item Box constraint in $\R^\nt$: given $0 <c_0 <c_1$,
    \begin{equation}
        K = [c_0,c_1]^\nt, \qquad \upchi_K(\sigma) = \prod_{i=1}^n \upchi_{[c_0,c_1](\sigma_i)}.
        \label{eq:K_cube}
    \end{equation}
    This restriction is mandatory in order to ensure that the conductivity $\sigma$ is non-vanishing and bounded away from $0$, hence the EIT problem is well-defined.
    \item  \textbf{Oracle}-based projection:
    \begin{equation}
        K = [c_0,c_1]^\nt \cap \Pi_\mathcal{O},
        \label{eq:K_oracle}
    \end{equation}
    being $\Pi_\mathcal{O}$ a coordinate hyperplane conducted through a reference point $\sigma_0$, i.e., of the form:
    \begin{equation}
        \Pi_\mathcal{O} = \big\{ x \in \R^\nt: \quad x_i - \sigma_{0,i} = 0 \ \ \forall i \in \{1,\ldots,n\} \setminus I_{\mathcal{O}}\big\},
    \end{equation}
    where $I_{\mathcal{O}}$ denotes the set of active coordinates. The knowledge of $\Pi_\mathcal{O}$ (i.e., of the expected support of $\sigma - \sigma_0$) can be seen as the outcome of a separate support estimation problem, and acts as an Oracle for the PGM scheme \eqref{eq:PG-Phi}. The projection onto $\Pi_\mathcal{O}$ can be performed as follows:
    \[
    \operatorname{proj}_{\Pi_\mathcal{O}}(\sigma) = \sigma_0 + M_\mathcal{O} \odot (\sigma - \sigma_0),
    \]
    where $\odot$ denotes the element-wise product of vectors and $M_{\mathcal{O}} \in \R^{\nt}$ is the \textit{mask} associated with the Oracle $\mathcal{O}$, namely, a vector such that $[M_\mathcal{O}]_i = 1$ if $i \in I_\mathcal{O}$ and $0$ otherwise.
\end{enumerate}

The following result significantly simplifies the computation of $\prox_{\lambda \mu g}$ in \eqref{eq:PG-Phi} for our proposed choices of $R$ and $K$.

\begin{prop}
    \label{prop:summative_prox}
\textbf{[Proximal map of $g$]} 
Let $g$ be a proper, convex and continuous function defined as 
\begin{equation}
g(\sigma) = R(\sigma) + \upchi_K(\sigma),
\label{eq:g_fun}
\end{equation}
where $R$ is a non-negative, proper, continuous, convex functional, and $K \subset K_{0,1} = [c_0,c_1]^n$ is a convex and closed set in $\R^n$.
If $R$ is chosen as in \eqref{eq:R_ell1} or \eqref{eq:R_TV} and $K$ as in \eqref{eq:K_cube} or \eqref{eq:K_oracle}, then the proximal of $g$ is \textit{summative}, 
$\prox_g = \prox_{\upchi_K} \circ \prox_R $,  and, in particular, it satisfies
\begin{equation}
\prox_{\lambda \mu g} = \operatorname{proj}_K \circ \ \prox_{\lambda \mu R} \ .
    \label{eq:summative_prox}
\end{equation}

\end{prop}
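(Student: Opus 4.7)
The plan is to work through the first-order optimality conditions for the joint proximal problem and verify that applying $\prox_{\lambda\mu R}$ followed by $\operatorname{proj}_K$ produces a point that satisfies them. Concretely, $v^\star = \prox_{\lambda\mu g}(y)$ is characterised by the inclusion
\[
y - v^\star \in \lambda\mu\,\partial R(v^\star) + N_K(v^\star),
\]
where $N_K$ is the normal cone of $K$. Setting $u^\star = \prox_{\lambda\mu R}(y)$ and $v^\star = \operatorname{proj}_K(u^\star)$, the defining optimality relations give $y - u^\star \in \lambda\mu\,\partial R(u^\star)$ and $u^\star - v^\star \in N_K(v^\star)$, and summing them yields
\[
y - v^\star \in \lambda\mu\,\partial R(u^\star) + N_K(v^\star).
\]
Thus, what really has to be verified is that the specific subgradient of $R$ picked up at $u^\star$ remains a subgradient of $R$ at $v^\star$, i.e.\ $\partial R(u^\star) \subseteq \partial R(v^\star)$ along the relevant element. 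Once that is established, $v^\star$ fulfils the optimality condition for $\prox_{\lambda\mu g}(y)$, and the coincidence $\prox_{\lambda\mu g} = \operatorname{proj}_K \circ \prox_{\lambda\mu R}$ follows from the uniqueness of the proximal point of a proper convex lsc function.

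The case analysis for the subgradient-invariance step proceeds as follows. For $R(\sigma) = \|\sigma-\sigma_0\|_1$, the subdifferential is separable and at each coordinate equals $\operatorname{sign}(u_i^\star - \sigma_{0,i})$ when $u_i^\star \neq \sigma_{0,i}$ and $[-1,1]$ otherwise. Since $\sigma_0 \in [c_0,c_1]^\nt$, the component-wise truncation onto $[c_0,c_1]$ (the case $K$ as in \eqref{eq:K_cube}) cannot flip the sign of $u_i^\star - \sigma_{0,i}$; it either preserves it or sends the value exactly to a point in $[c_0,c_1]$, in which case the full interval $[-1,1]$ is available at $v^\star$. In the Oracle case \eqref{eq:K_oracle}, any coordinate with $i \notin I_\mathcal{O}$ is forced to $\sigma_{0,i}$, and the subgradient of $|\cdot-\sigma_{0,i}|$ at that point is the full interval $[-1,1]$, which still contains the component coming from $u^\star$; on the active indices $i \in I_\mathcal{O}$ one recovers the box case. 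For $R = \operatorname{TV}$, the subgradient at a point is determined by the signs $s_{ik} = \operatorname{sign}(x_i - x_k)$ on each edge, with freedom $s_{ik} \in [-1,1]$ on edges where $x_i = x_k$. Here I would use the fact that coordinate-wise truncation onto $[c_0,c_1]$ is monotone: if $u_i^\star \geq u_k^\star$, then $v_i^\star \geq v_k^\star$, so the sign chosen at $u^\star$ is either still consistent with the strict inequality at $v^\star$ or falls into the relaxed $[-1,1]$ range where $v_i^\star = v_k^\star$. Combining this with the separable Oracle projection (which only forces certain coordinates to the reference value $\sigma_{0,i}$, at which every edge incident to $i$ falls into the flexible-sign regime) again guarantees that the TV-subgradient at $u^\star$ remains valid at $v^\star$.

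The main obstacle is precisely this subgradient-invariance step in the non-separable TV setting: verifying that combining the Oracle projection with truncation does not destroy the sign pattern used to certify the TV-subgradient. Once this monotonicity/flexibility argument is handled uniformly across the four combinations of $R$ and $K$, the rest of the proof is a clean application of the optimality conditions together with the uniqueness of the proximal point. A useful side comment to include would be the separability observation for the $\ell_1$ case, which reduces the problem to a one-dimensional check and serves as a warm-up for the TV analysis.
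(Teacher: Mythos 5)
Your route is genuinely different from the paper's: the paper proves this proposition purely by citation (a separability result of Chaux et al.\ for the $\ell^1$/box combinations, and Proposition II.2 of Pustelnik--Condat for the TV cases), whereas you verify the identity directly through the optimality inclusion $y - v^\star \in \lambda\mu\,\partial R(v^\star) + N_K(v^\star)$ and reduce everything to showing that the subgradient of $R$ selected at $u^\star = \prox_{\lambda\mu R}(y)$ survives the projection onto $K$. That framework is sound (it is essentially Yu's sufficient condition for prox decomposition), and your case analysis is correct for $R = \|\cdot-\sigma_0\|_1$ with either $K$, and for $R=\operatorname{TV}$ with the box $K=[c_0,c_1]^n$, where the clip is one and the same monotone scalar map applied to every coordinate, so no edge sign can flip.

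The gap is in the fourth case, TV with the Oracle set \eqref{eq:K_oracle}. Your justification there — that forcing $v_i^\star=\sigma_{0,i}$ puts every edge incident to $i$ ``into the flexible-sign regime'' — is false: for TV the kinks sit at $x_i=x_k$, not at $x_i=\sigma_{0,i}$, so the subdifferential on an edge is only relaxed to $[-1,1]$ when the two \emph{projected} endpoint values coincide. On an edge joining an inactive vertex (snapped to $\sigma_{0,i}$) to an active one (merely clipped), the order of the two values can reverse, and then the sign $s_{ik}$ certified at $u^\star$ is no longer admissible at $v^\star$. Concretely, take a single edge between vertices $1\notin I_{\mathcal{O}}$ and $2\in I_{\mathcal{O}}$, with $\sigma_{0,1}=1$, $[c_0,c_1]=[0.5,2.5]$, $R(x)=0.1\,|x_1-x_2|$, $\lambda\mu=1$ and $y=(3,\,2.2)$. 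Then $\prox_{R}(y)=(2.9,\,2.3)$ and $\operatorname{proj}_K(\prox_R(y))=(1,\,2.3)$, while a direct computation gives $\prox_{R+\upchi_K}(y)=(1,\,2.1)$, since $\argmin_{x\in[0.5,2.5]}\{\tfrac12(x-2.2)^2+0.1\,|x-1|\}=2.1$. So not only does the subgradient-invariance step fail (the edge sign flips from $+1$ to $-1$), the composition identity itself fails on this configuration, which can occur locally on any mesh. This case therefore cannot be closed by your argument (nor, seemingly, without additional hypotheses relating $\sigma_0$, $I_{\mathcal{O}}$ and the graph); it is also the case for which the paper's appeal to the cited projection-composition result deserves the closest scrutiny. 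The remaining three combinations are fine as you have them.
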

\begin{proof}
    Both the choices of $K$ lead to characteristic functions $\upchi_K(\sigma)$ which can be written as the product of $n$ functions, each in one component $\sigma_i$. This is also the case for $R$ in \eqref{eq:R_ell1}. Hence, in these cases, \eqref{eq:summative_prox} follows by \cite[Proposition 2.2]{chaux2009nested}. Instead, when $R$ is chosen as in \eqref{eq:R_TV}, both $R$ and $K$ fit the hypotheses of \cite[Proposition II.2]{pustelnik2017proximity}, which entails \eqref{eq:summative_prox}.
\end{proof}

The previous result can be extended also to other separable functionals $R$, as the one in \eqref{eq:R_ell1}, which can be decomposed with respect to the components of $\sigma$, i.e., that can be written as $R(\sigma) = \sum_{i = 1}^{\nt} \psi_i(\sigma_i)$, being $\psi_i\colon \R \rightarrow \R$ non-negative, continuous and convex. This class includes, for example, the choices $R(\sigma) = \| \sigma\|_p^p$ for $p\geq 1$. Moreover, the theoretical treatment of case \eqref{eq:R_TV} can be extended to functionals that have different definitions of the weights $w_{ik}$, also replacing the absolute value $|\sigma_i - \sigma_k|$ by the difference $\sigma_i-\sigma_k$. This includes, for example, wavelet operators defined on meshes. Moreover, it can be extended to the case $R(\sigma) = \operatorname{TV}(\sigma-\sigma_0)$, for $\sigma_0 \in [c_0,c_1]^n$.

In conclusion, we consider the solution of the variational model \eqref{eq:regularization} with $g(\sigma)$ defined  as in  \eqref{eq:g_fun}, which reads as
\begin{equation}
    \sigmaL \in \argmin_{\sigma \in \R^\nt} \left\{ \mathcal{J}_\lambda^\delta(\sigma):= \frac{1}{2}\| \Phi(\sigma) - \Ld\|^2 + \frac{\lambda \rho}{2} \| \sigma \|^2 + \lambda R(\sigma) + \upchi_K(\sigma)
\right\}.
\label{eq:regularization_1}
\end{equation}
We propose the following four versions of the Proximal-Gradient method defined in \eqref{eq:PG-Phi}, where the $\prox_{\lambda \mu g}$ operator is characterized by the four different combinations of $R$ and $K$ in \eqref{eq:regularization_1}:
\begin{enumerate}[leftmargin=1cm]
    \item [PGM-$\ell^1$]: $R$ is chosen as in \eqref{eq:R_ell1} and $K$ as in \eqref{eq:K_cube}, promoting sparsity of the difference from the reference $\sigma_0$ and imposing the state constraints on $\sigma$;
    \item [PGM-TV]: $R$ is chosen as in \eqref{eq:R_TV} and $K$ as in \eqref{eq:K_cube}, promoting sparsity of gradient of $\sigma$ and imposing the state constraints on $\sigma$;
    \item [PGM-$\ell^1$-$M_\mathcal{O}$]: $R$ is chosen as in \eqref{eq:R_ell1} and $K$ as in \eqref{eq:K_oracle}, employing a pre-trained Oracle to select the support of $\sigma - \sigma_0$;
    \item [PGM-TV-$M_\mathcal{O}$]: $R$ is chosen as in \eqref{eq:R_TV}, and $K$ as in \eqref{eq:K_oracle}.
\end{enumerate}

\section{Support Estimation via Oracle-Net}
\label{sec:sec6}
We devised an Oracle-based strategy to predict the expected support $I_{\mathcal{O}}$ of a conductivity distribution $\sigma$ starting from a set of measurements $\Lambda^{\delta}$.
The support $I_{\mathcal{O}}$ allows to classify $\sigma$ on the vertices of the mesh domain  $\Tau_h$ as belonging to an inclusion, when  $\sigma_i \neq \sigma_{0,i}$, or belonging to the background $\sigma_0$.  
The Oracle $\mathcal{O}$ produces a binary mask $M_{\mathcal{O}}$ on the vertices of the mesh $\Tau_h$, with $[M_{\mathcal{O}}]_i=1$ for vertex $v_i$ if $i \in I_{\mathcal{O}}$, and $0$ otherwise. 

We modeled  $\mathcal{O}$ with a Graph-U-Net, named Oracle-Net, which is a U-Net-like architecture adopted for graph/mesh data as in \cite{2019pytorch,Col2023DeepplugandplayPG}. 
The Graph-U-Net is an autoencoder architecture based on convolutional graph operators and gPool and gUnpool operators. The pool (gPool) operator samples some nodes to form a coarser mesh while the unpooling (gUnpool) operator performs the inverse process, by increasing the number of nodes exploiting the list of node locations selected in the corresponding gPool. 

 The architecture of the Oracle-Net neural network is depicted in Fig. \ref{fig:ONET}.
Each layer $T_\ell$ is characterized by the composition of a GCN-based graph convolution \cite{K7}, a  ReLU activation function $s$, and a gPool/gUnpool operator, here denoted by a generic $p$, and is applied to the $n_c$-dimensional input feature array $X \in \R^{n \times n_c}$, namely
\begin{equation}
\label{eq:layer}
T_\ell: X \mapsto s(GCN(p(X);\Theta_{\ell})),
\end{equation}
where $\Theta_{\ell}  \in \R^{n_c \times n_f}$ denotes the trainable weight matrix of layer $\ell$, and $n_f$ is the number of output features of the layer $\ell$. 
The Oracle-Net with $2P$ layers performs the following composite function 
\begin{equation}
\mathcal{O}(X) := \underbrace{T_{2P}^{gU} \circ \cdots \circ T_{P+1}^{gU}}_{decoder} \circ \underbrace{T_P^{gP} \circ \cdots \circ T_1^{gP}}_{encoder}(X),
\label{eq:Gcomp}
\end{equation}
where 
$T_\ell^{gP}$, for $\ell=1,\ldots,P$ applies a gPool operation to the feature vector, while $T_\ell^{gU}$, 
 for $\ell=P+1,\ldots,2P$ applies a gUnpool operation.  
The last decoder layer uses a sigmoid activation function $s$ in \eqref{eq:layer} which returns the probability that $v$ belongs to the support.
To finally produce the binary mask $M_{\mathcal{O}}$ in $\{0,1\}^n$ a thresholding is applied with value $\sigma_{th}$ in the inference phase.

The Oracle-Net takes as input a mesh $\Tau_h$ (hereon, we consider the same mesh for the discretization of the variable $\sigma$ and for the numerical solution of the differential problem, thus $\Tau=\Tau_h$) with its adjacency matrix, and a vector of weights $w \in \R^{m}$ for each vertex $v \in \Tau_h$ which moderates the influence of a given potential measurement on the point $v$. 
According to the selected measurement protocol the weight distribution $w$ changes, as illustrated in Fig.\ref{fig:ONETbis}(a) for adjacent injection, adjacent measurement,
$(I_{[E_k,E_{k+1}]},V_{[E_j,E_{j+1}]})$, and in Fig.\ref{fig:ONETbis}(b)
for opposite injection, adjacent measurement,  $(I_{[E_k,E_{k+p/2}]},V_{[E_j,E_{j+1}]})$.
In particular, each vector component $w_i$ associated with a given pair (injection, measurement)
captures the influence of the measurement on the vertex $v_i$ of the mesh and is computed as follows
\begin{equation}
w_i=\frac{1}{d_V^i+d_I^i} \, V_{[E_j,E_{j+1}]} \, ,
\end{equation}
where $d_V^i$ is the distance from the vertex $v_i$ to the midpoint between the pairs of electrodes involved in the measurements $[E_j,E_{j+1}]$, and $d_I^i$ is the distance between the vertex $v_i$ and the midpoint between the pairs of electrodes used for the injection, $[E_k,E_{k+1}]$ or $[E_k,E_{k+p/2}]$. 

\begin{figure}[h]
    \centering
    \begin{tabular}{c}       \fbox{\includegraphics[width=0.85\textwidth]{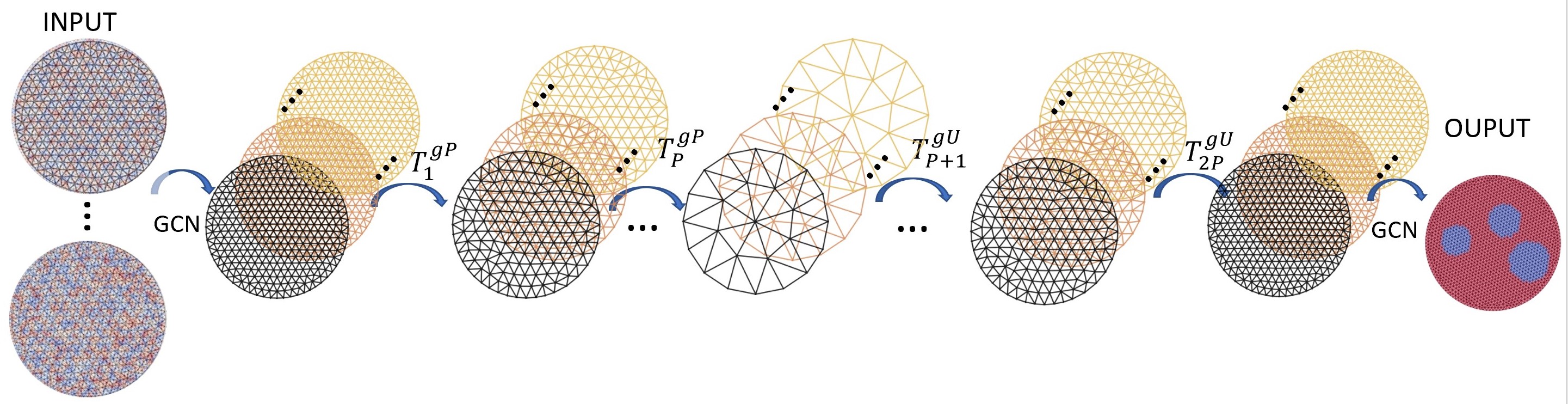}}\\
        \end{tabular}
    \caption{Oracle-Net architecture}
    \label{fig:ONET}
\end{figure}

\begin{figure}[h]
    \centering
    \begin{tabular}{cc}       
        \includegraphics[width=0.25\textwidth]{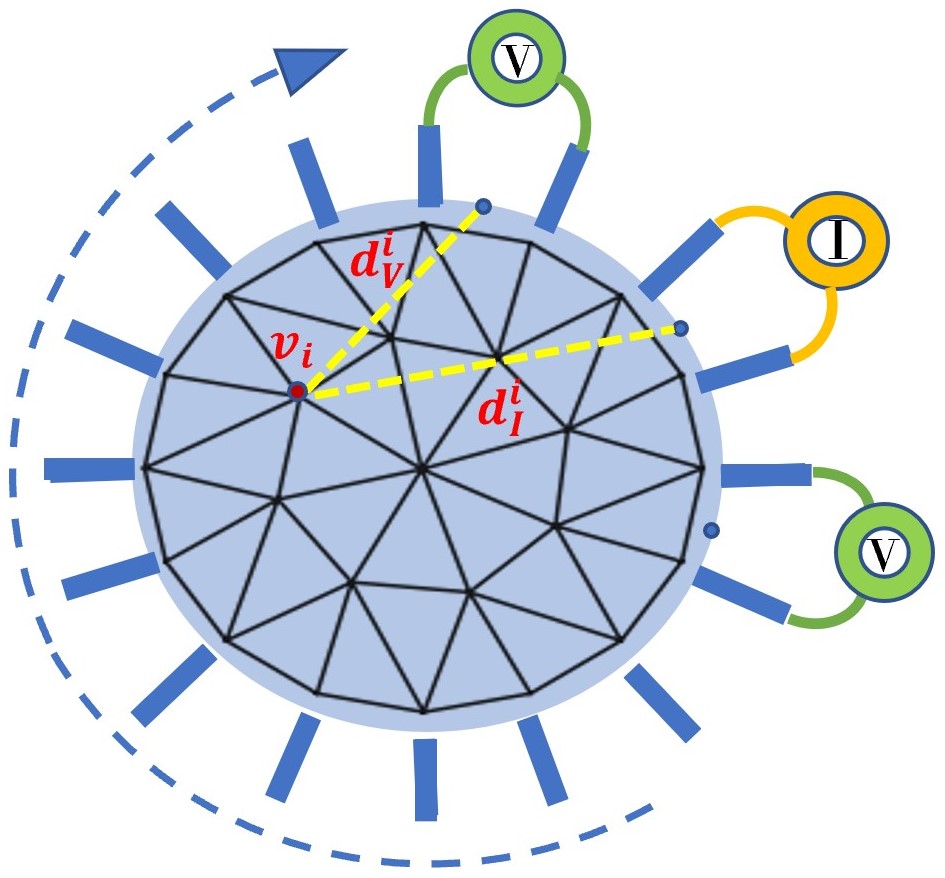} &
        \includegraphics[width=0.25\textwidth]{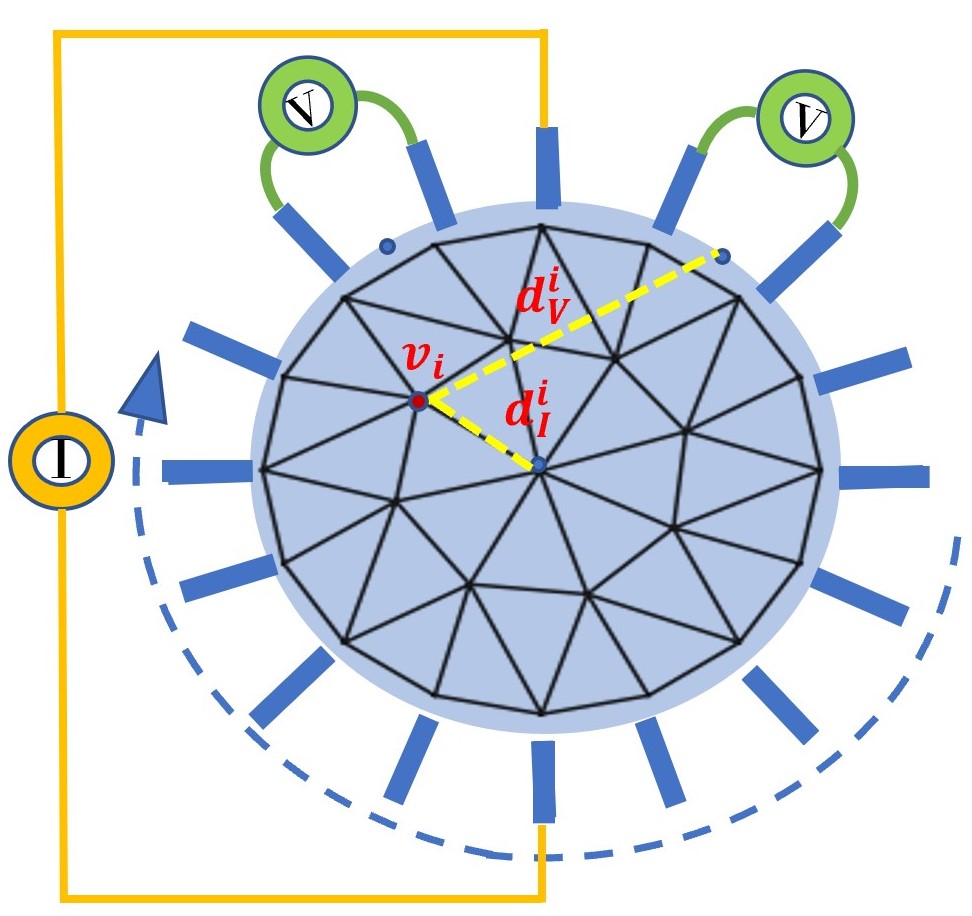} \\ 
        (a) & (b)\\
        \end{tabular}
    \caption{Oracle-Net, weight vector setup: (a)  adjacent injection-adjacent measurements; (b) opposite injection-adjacent measurements}
    \label{fig:ONETbis}
\end{figure}

Oracle-Net is a supervised GNN which in the training phase takes in input the exact support mask $M_{\mathcal{O}}$ for each training sample $j$ for $j=1,\dots, N$, 
and computes the optimal parameters $\Theta$ by minimizing the joint loss function: 
\begin{equation}
\mathcal{L}(\Theta):=\sum_{j=1}^{N} \big( \sum_i BCE( p_i(\Theta),[M_{\mathcal{O}}]_i^j) + \sum_i MSE(p_i(\Theta),[M_{\mathcal{O}}]_i^j) \big)
\end{equation}
where $BCE(q_1,q_2)$ is binary cross-entropy between the target GT ($q_2$) and the input probability ($q_1$) in one single class, and $MSE(q_1,q_2)$ applying an $L_2$ norm to minimize the difference between the ground truth $q_2$ and the probability $q_1$ obtained by the graph network at vertex $v_i$.

\section{Numerical Results}
\label{sec:sec7}

This section presents numerical results that illustrate
the performance of the proposed Oracle-Net-based Proximal Gradient Method for the nonlinear inverse EIT reconstruction problem.
In Section \ref{sec:sec71} we validate the Oracle-Net for the support estimation task; in Section \ref{sec:sec72} results on reconstruction performance on a 2D dataset are shown both in case of noise-free and noisy datasets; finally, in Section \ref{sec:sec73}
we address some experimental issues on the relation between the number of measurements and the sparsity in the Oracle mask, a well-assessed relation in linear compressed sensing that has not yet been established for the nonlinear case such as the EIT inverse problem.

The data used for the experimental session consists of a 2D synthetic EIT dataset generated on a mesh $\Tau_h$ composed of $n=1602$ vertices and $N_h=3073$ triangles. 
All examples simulate a circular tank slice of
unitary radius. In the circular boundary ring, $p=32$ equally spaced electrodes are located. 
The conductivity of the background liquid is set to be $\sigma_0 = 1.0$ $ \Omega m^{-1}.$
Each sample consists of a random number from 1 to 4 of inclusions inside the circular tank, localized randomly and characterized by random radius in the range $[0.15,0.25]$ and magnitude in the range $[0.2,2]$. 
Each inclusion consists of a homogeneous material with the same conductivity intensity. The region covered by the inclusions is significantly less than the total tank area, this leads to the sparsity in the solution vector $\sigma-\sigma_0$.

The acquisition of $m$ measurements is simulated through adjacent injection - adjacent measurement protocol for the results reported in Section \ref{sec:sec71}, and opposite injection - adjacent measurement protocol for the experiments illustrated in Sections \ref{sec:sec72} and \ref{sec:sec73}. In all the examples the setup
is considered blind, that is no a priori information about the sizes or locations of the inclusions is
considered.
In the forward calculations of $\Phi(\sigma)$ in \eqref{eq:regularization} we applied the KTCFwd forward solver, a two-dimensional version of the FEM described in \cite{Vetal1999}, kindly provided by the authors (website {\texttt https://github.com/CUQI-DTU/KTC2023-CUQI4}), which is based on a FEM implementation of the CEM model on triangle elements. The electric potential is discretized using second-order polynomial basis functions, while the conductivity is discretized on the nodes using linear basis functions on triangle elements.

\subsection{Oracle-Net validation}
\label{sec:sec71}

For the training of the Oracle-Net, we used an ad hoc designed dataset which consists of $5000$ instances each of dimension $992$. 
A portion of the dataset which is the $70\%$ of the total number is used for training,  the $15\%$ is used as a validation set and the remaining $15\%$ is employed as the test set for performance assessment.
Training of Oracle-Net has been performed with ADAM  optimizer, \cite{Adam}, using a learning rate equal to $2.5 \times 10^{-3}$ through $2034$ epochs with a mini-batch size of $10$ instances. 

As a figure of merit for assessing the performance of the Oracle-Net, we used the False Negative value $FN$, which represents the percentage on the test sample of misclassified vertices that corresponds to the vertices in the domain $\Tau_h$ belonging to inclusions which are misclassified as background.
For the success of the reconstruction algorithm, this represents the major drawback produced by the Oracle-Net since the misclassified vertices will not be carried out by the reconstruction algorithm.

In Fig.\ref{fig:OmaskIDEAL} we report the ideal Oracle mask (directly obtained by the ground truth conductivity distribution) superimposed to the mask $M_{\mathcal{O}}$ obtained by the Oracle-Net for six different samples.
For increasing threshold values $\sigma_{th}=\{0.4, 0.8,0.9 \}$ the support estimated by the Oracle-Net enlarges, while the $FN$ measure, reported in the bottom of each mask, decreases.
In Fig. \ref{fig:OmaskIDEAL}, in the first two columns the samples are obtained by an adjacent-adjacent protocol (see Fig.\ref{fig:ONETbis}(a)), while for the results in the other columns an opposite-adjacent protocol has been applied (see Fig.\ref{fig:ONETbis}(b)). The protocol indeed does not affect the quality of the obtained masks.

\begin{figure}[h]
    \centering
    \begin{tabular}{c|cccccc}
    {\begin{turn}{90}{$\sigma_{th} =0.4$}
    \end{turn} 
    } 
    &
\includegraphics[width=0.12\textwidth]{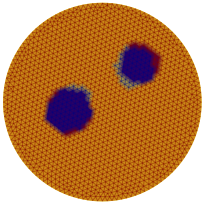} &
\includegraphics[width=0.12\textwidth]{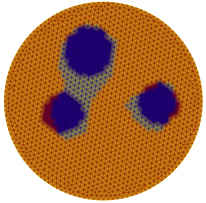} &
\includegraphics[width=0.12\textwidth]{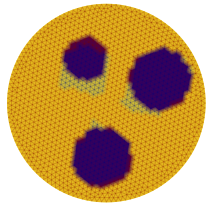}&
\includegraphics[width=0.12\textwidth]{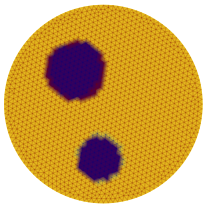}&
\includegraphics[width=0.12\textwidth]{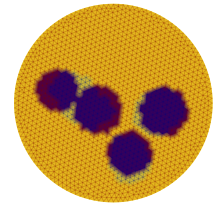} &
\includegraphics[width=0.12\textwidth]{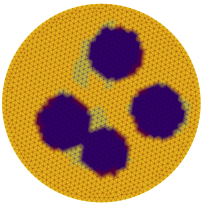}
\\
&{\tiny FN=4.67} &{\tiny FN=8.0} &{\tiny FN=2.37} & {\tiny FN=1.87}&{\tiny FN=5.37}  &{\tiny FN= 2.43}
\\
\hline
 { \begin{turn}{90}  $\sigma_{th} =0.8$
 \end{turn} } &
\includegraphics[width=0.12\textwidth]{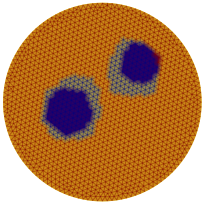} &
\includegraphics[width=0.12\textwidth]{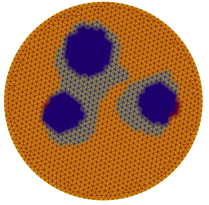} &
\includegraphics[width=0.12\textwidth]{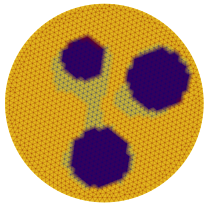} &
\includegraphics[width=0.12\textwidth]{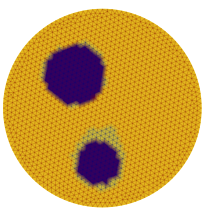}&
\includegraphics[width=0.12\textwidth]{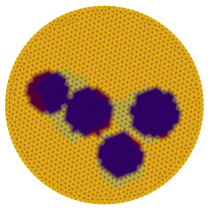} &
\includegraphics[width=0.12\textwidth]{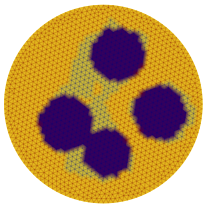}
\\
&{\tiny FN=1.43}& {\tiny FN=3.0} &{\tiny  FN=0.44} &{\tiny FN=0.06}&{\tiny FN=2.31}&{\tiny FN=0.37} \\
\hline
\\
 { {\begin{turn}{90}{$\sigma_{th} =0.9$}
    \end{turn} 
    } } &
\includegraphics[width=0.12\textwidth]{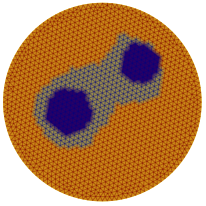} &
\includegraphics[width=0.12\textwidth]{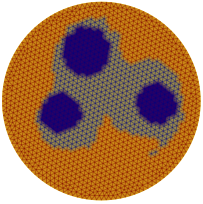} &
\includegraphics[width=0.12\textwidth]{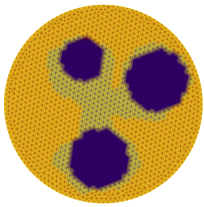}&
\includegraphics[width=0.12\textwidth]{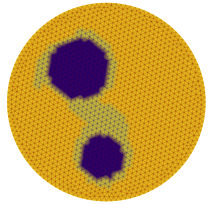}&
\includegraphics[width=0.12\textwidth]{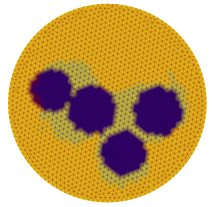} &
\includegraphics[width=0.12\textwidth]{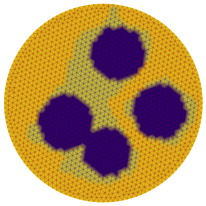}
\\
&{\tiny FN=0.81} &{\tiny FN=1.8} &{\tiny FN=0.06} &{\tiny FN=0.0} &{\tiny FN=0.62}  &{\tiny FN=0.0}\\
    \end{tabular}
    \caption{Ideal Oracle  (solid) overlapped to the learned Oracle mask $M_{\mathcal{O}}$ (in transparency). Top to bottom: three different thresholds $\sigma_{th}$ applied to six different samples.} 
    \label{fig:OmaskIDEAL}
\end{figure}

\begin{figure}[h]
    \centering
    \begin{tabular}{cc|cccc}
    \hline
    $M_{\mathcal{O}}$ &
    GT & PGM-TV-$M_{\mathcal{O}}$
    & PGM-TV &
    PGM-$\ell_1$-$M_{\mathcal{O}}$ &
    PGM-$\ell_1$ \\
    \hline 
\includegraphics[width=0.13\textwidth]{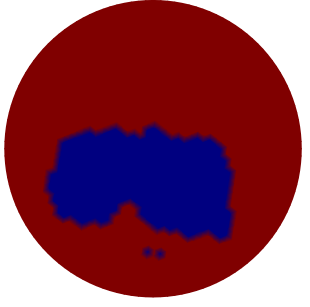} & \includegraphics[width=0.13\textwidth]{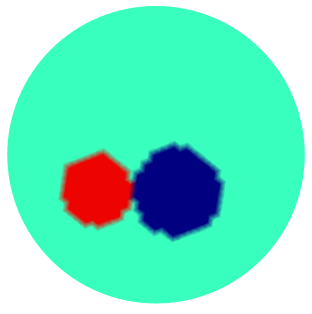} &
\includegraphics[width=0.13\textwidth]{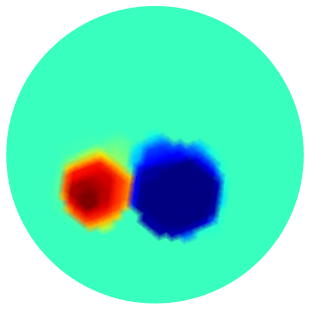} & \includegraphics[width=0.13\textwidth]{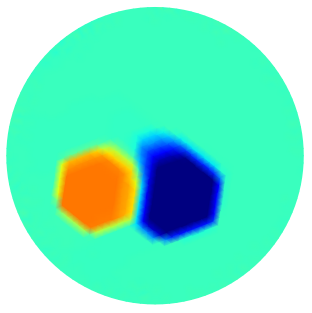} &    \includegraphics[width=0.13\textwidth]{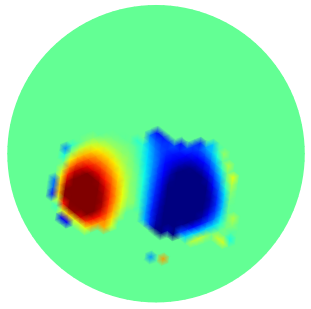} & \includegraphics[width=0.13\textwidth]{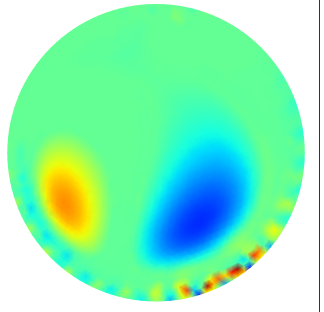} \\
\scriptsize{FN=0.0} \%& & \scriptsize{PSNR=32.84} & \scriptsize{PSNR=28.78} & \scriptsize{PSNR=27.19} &
\scriptsize{PSNR=22.04} \\
\hline
\includegraphics[width=0.13\textwidth]{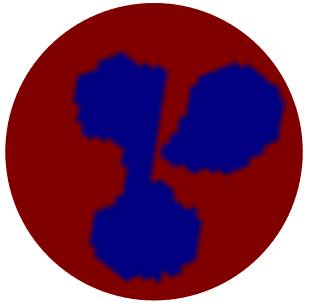} & \includegraphics[width=0.13\textwidth]{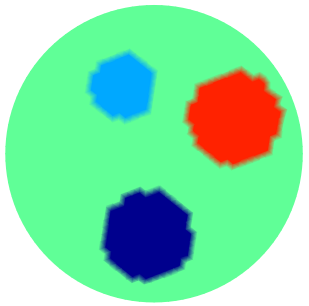} &
\includegraphics[width=0.13\textwidth]{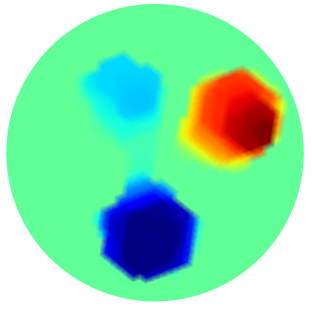} & \includegraphics[width=0.13\textwidth]{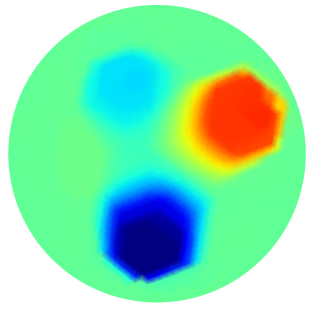} &    \includegraphics[width=0.13\textwidth]{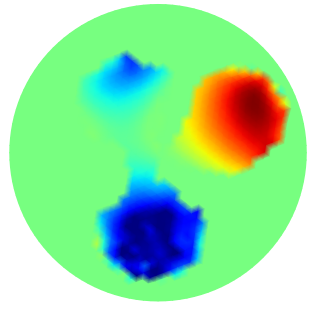} & \includegraphics[width=0.13\textwidth]{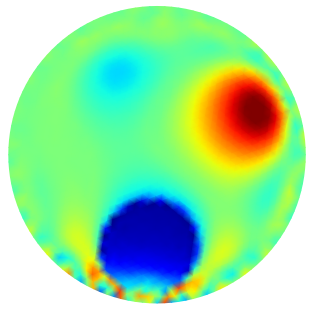}\\
\scriptsize{FN=0.24}\% & & \scriptsize{PSNR=24.555} & \scriptsize{PSNR=23.1313} & \scriptsize{PSNR=23.1387} &
\scriptsize{PSNR=20.051} \\
\hline
\includegraphics[width=0.13\textwidth]{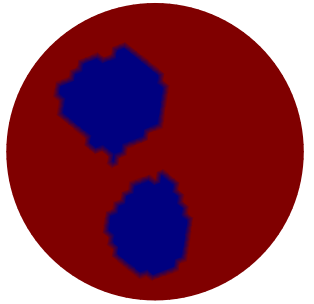}
& \includegraphics[width=0.13\textwidth]{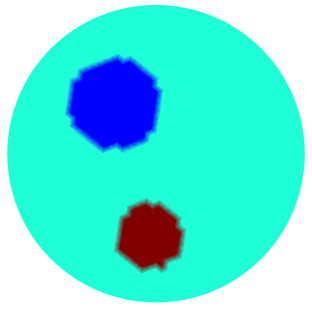} &
\includegraphics[width=0.13\textwidth]{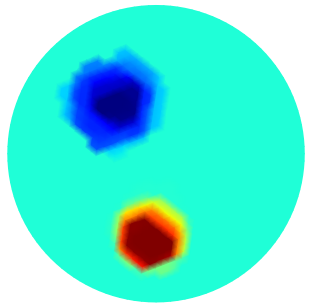} & \includegraphics[width=0.13\textwidth]{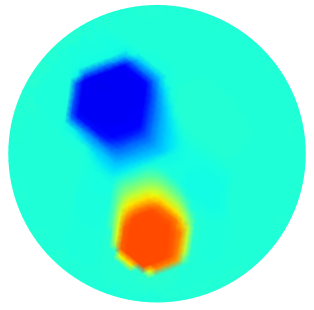} &    \includegraphics[width=0.13\textwidth]{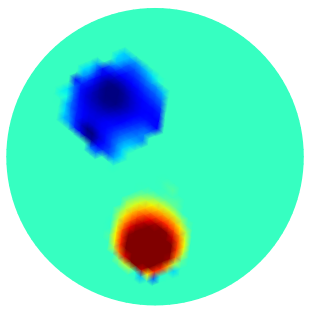} & \includegraphics[width=0.13\textwidth]{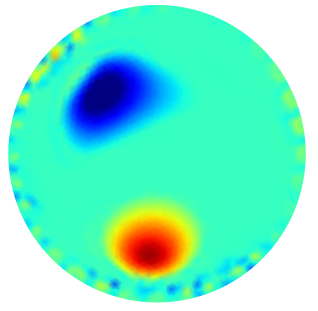}\\
\scriptsize{FN=0.0}\%& & \scriptsize{PSNR=27.53} & \scriptsize{PSNR=25.99} & 
\scriptsize{PSNR=26.93} &
\scriptsize{PSNR=22.74} \\
\end{tabular}
    \caption{Noise-free datasets, reconstructions with the different algorithms applying 
    opposite-adjacent protocol and $\sigma_{th}=0.8$ for the threshold of the Oracle-Net result.}
    \label{fig:Omask_NONOISE}
\end{figure}

\begin{figure}[h]
    \centering
    \begin{tabular}{cc|cc}
    \hline
    $M_{\mathcal{O}}$ & GT & PGM-TV-$M_{\mathcal{O}}$
    & PGM-TV \\
    \hline 
\includegraphics[width=0.13\textwidth]{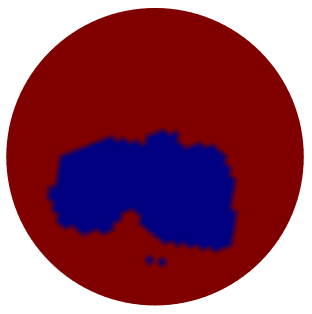} & \includegraphics[width=0.13\textwidth]{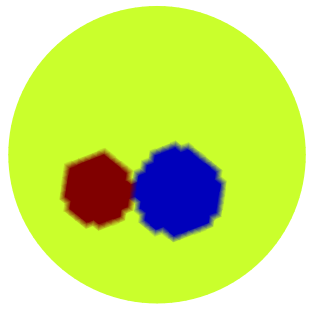} &
\includegraphics[width=0.13\textwidth]{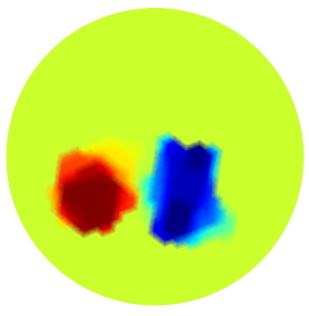} & \includegraphics[width=0.13\textwidth]{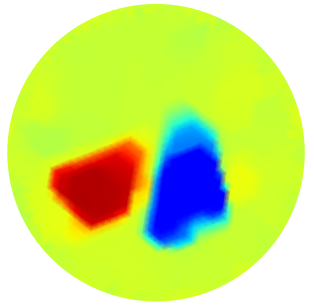} \\
\scriptsize{FN=0.0} \%& & \scriptsize{PSNR=26.10} & \scriptsize{PSNR=25.21} \\
\hline
\includegraphics[width=0.13\textwidth]{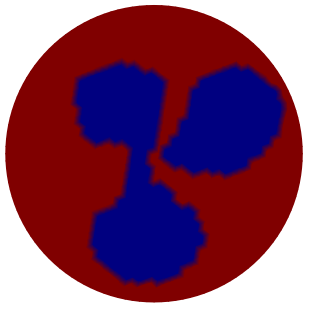} & \includegraphics[width=0.13\textwidth]{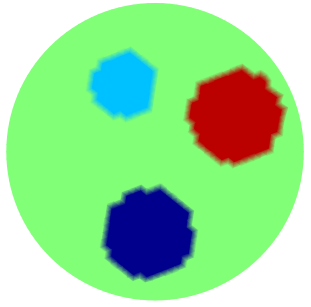} &
\includegraphics[width=0.13\textwidth]{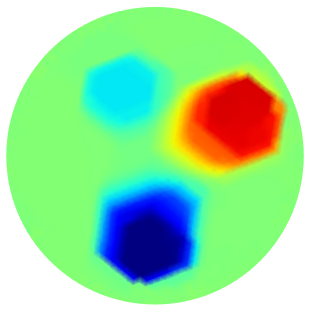} & \includegraphics[width=0.13\textwidth]{Noisy_Rec_No_Oracle_FB_TV_test_129.png} \\
\scriptsize{FN=0.62} \%& & \scriptsize{PSNR=23.1918} & \scriptsize{PSNR=23.044} \\
\hline
\includegraphics[width=0.13\textwidth]{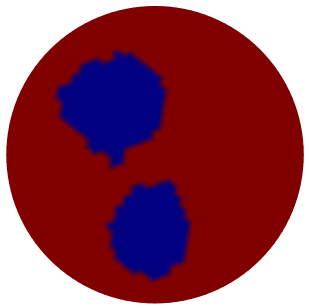} & \includegraphics[width=0.13\textwidth]{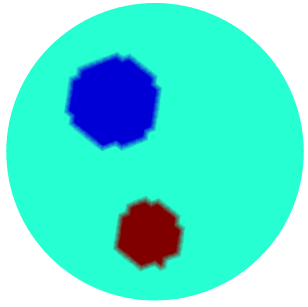} &
\includegraphics[width=0.13\textwidth]{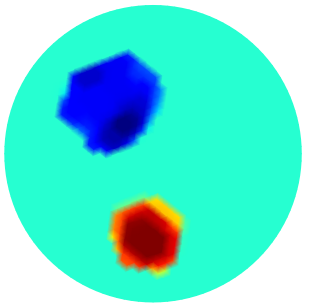} & \includegraphics[width=0.13\textwidth]{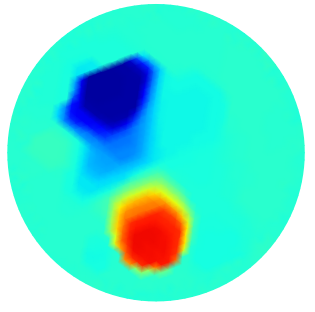} \\
\scriptsize{FN=0.0} \%& & \scriptsize{PSNR=26.999} & \scriptsize{PSNR=24.6806} \\
\hline
\end{tabular}
    \caption{Noisy dataset, reconstructions with the different algorithms applying 
    opposite-adjacent protocol  and $\sigma_{th}=0.8$ for the threshold of the Oracle-Net result.}
\label{fig:Omasknoise}
\end{figure}

\bigskip

\subsection{EIT reconstruction results}
\label{sec:sec72}

The inference phase of the Oracle-Net allows us to determine the estimated support mask $M_{\mathcal{O}}$ for a given set of new measurements (unseen from the Oracle). Then the conducibility $\sigma$ is computed by applying the PGM algorithm. 

In order to investigate the influence on the performance of the Oracle-Net in the EIT reconstruction problem \eqref{eq:regularization}, we compared the results of PGM-TV-$M_{\mathcal{O}}$
and PGM-$\ell_1$-$M_{\mathcal{O}}$, which exploit the Oracle-Net estimated support, with 
the standard PGM algorithms PGM-TV and  PGM-$\ell_1$ analogously regularized.
In all the reported  experiments, we selected the optimal $\lambda$ value through trials and errors, while the $\rho$ value  was set to $10^{-12}.$

First, we consider the case where the measurement error $\eta$ in \eqref{eq:cs} vanishes. 

In Fig.\ref{fig:Omask_NONOISE} we show some sample conductivity reconstructions and we report below the corresponding PSNR evaluation. For each sample, the first column represents the estimated support $M_{\mathcal{O}}$ pre-computed by the Oracle-Net together with the FN value associated, the second column illustrates the target ground truth (GT) reconstruction, and from the third to the sixth columns the computed reconstructions are shown for the different algorithms together with the PSNR values obtained.
From a visual inspection of the illustrated results and from the PSNR values reported, we can observe the benefit of incorporating the Oracle-Net estimated support into the PGM algorithmic framework. PGM-TV  and PGM-$\ell_1$ achieve lower performance than their Oracle-Net-based counterparts. Moreover, the use of the TV regularizer seems to be beneficial with or without Oracle exploitation.

\bigskip

We finally evaluate the robustness against measurements corrupted by additive noise. The measured voltage is computed by the forward model KTCFwd and recorded as a vector $V_m=\Phi(\sigma) \in \R^m$. According to the degradation model \eqref{eq:cs}, the noiseless measures $V_m$ are corrupted by additive white Gaussian noise to simulate experimentally measured voltages, with noise level $\bar{\delta}$:
\begin{equation}
\eta =  \bar{\delta} \; \|{V_m}\|_2  \; \bar{n}, \quad \bar{n} \in \mathcal{N}(0,1).
\end{equation}
The Signal to Noise Ratio (SNR)
in dB is calculated as 
$SNR = 10 \log_{10} (\frac{\| V_m \|_2^2}{\|\eta\|_2^2})$, while Peak-SNR (PSNR) will be used to measure the quality of an image after the reconstruction.

In Fig.\ref{fig:Omasknoise} some conductivity reconstructions are illustrated. The  collected voltage has been corrupted by adding a realization of random noise with Gaussian distribution and noise level 
 $\bar{\delta}=2.5 \times 10^{-3}$ (added noise is such that the intrinsic SNR is 40dB).
We observe that, even though the Oracle-Net was trained on noiseless data, the mask obtained in inference $M_{\mathcal{O}}$ with noise measurements -  Fig.\ref{fig:Omasknoise} first column - is sufficiently accurate, and PGM algorithm performs very well on the noise measurements.
A natural consequence of having noise-corrupted measurements is the degradation in the conductivity reconstruction. Comparing the results in the third columns of Fig.\ref{fig:Omask_NONOISE}  and Fig.\ref{fig:Omasknoise} we observe quantitatively a degradation in the performance of reconstructions in terms of PSNR values. Analogous behavior can be observed comparing the fourth columns of  Fig. \ref{fig:Omask_NONOISE}  and Fig. \ref{fig:Omasknoise}, which have been obtained without taking advantage of the contribution of the Oracle Mask. 
\begin{figure}[ht]
\centering
\includegraphics[width=0.4\textwidth]{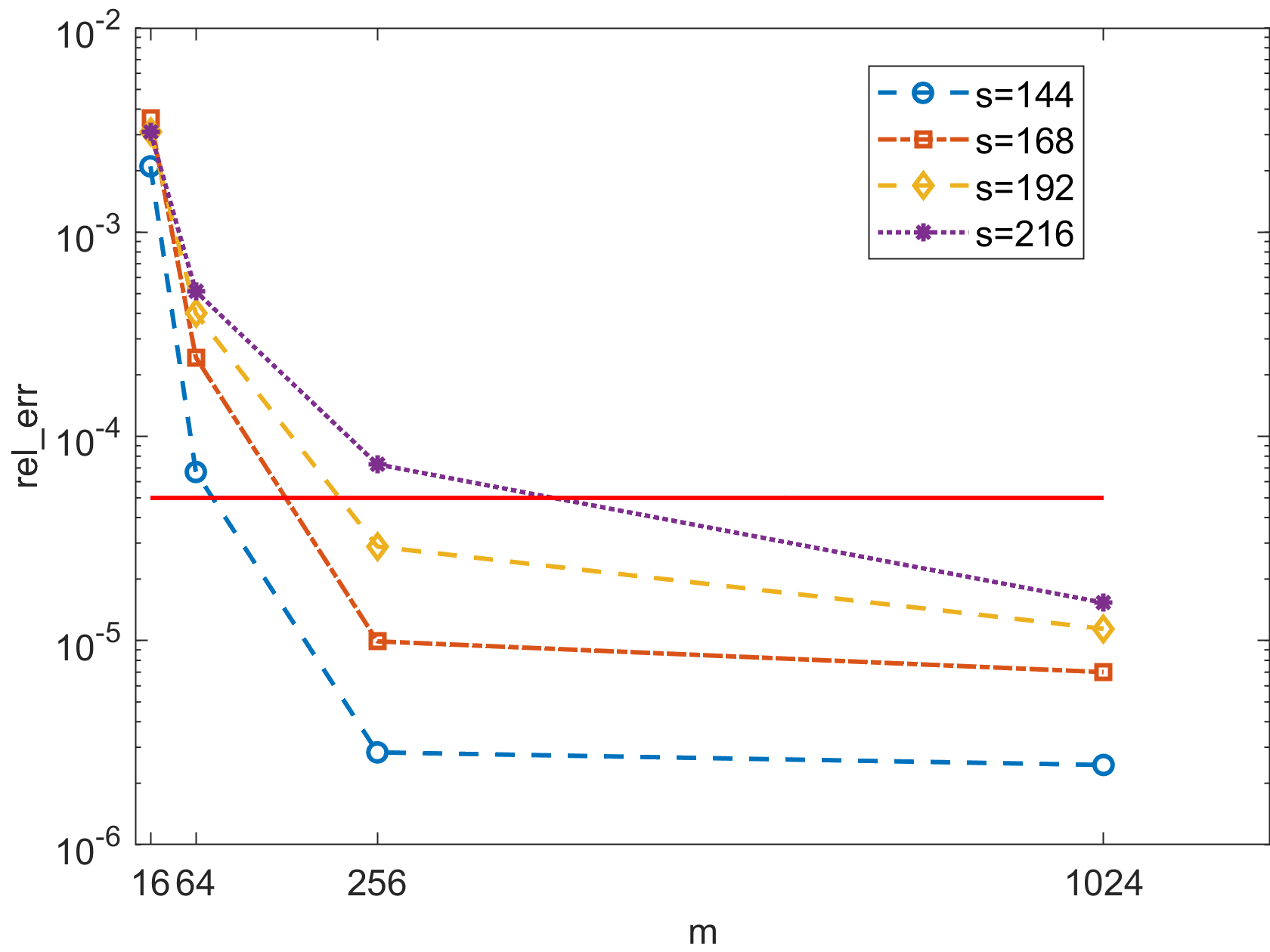}
\includegraphics[width=0.4\textwidth]{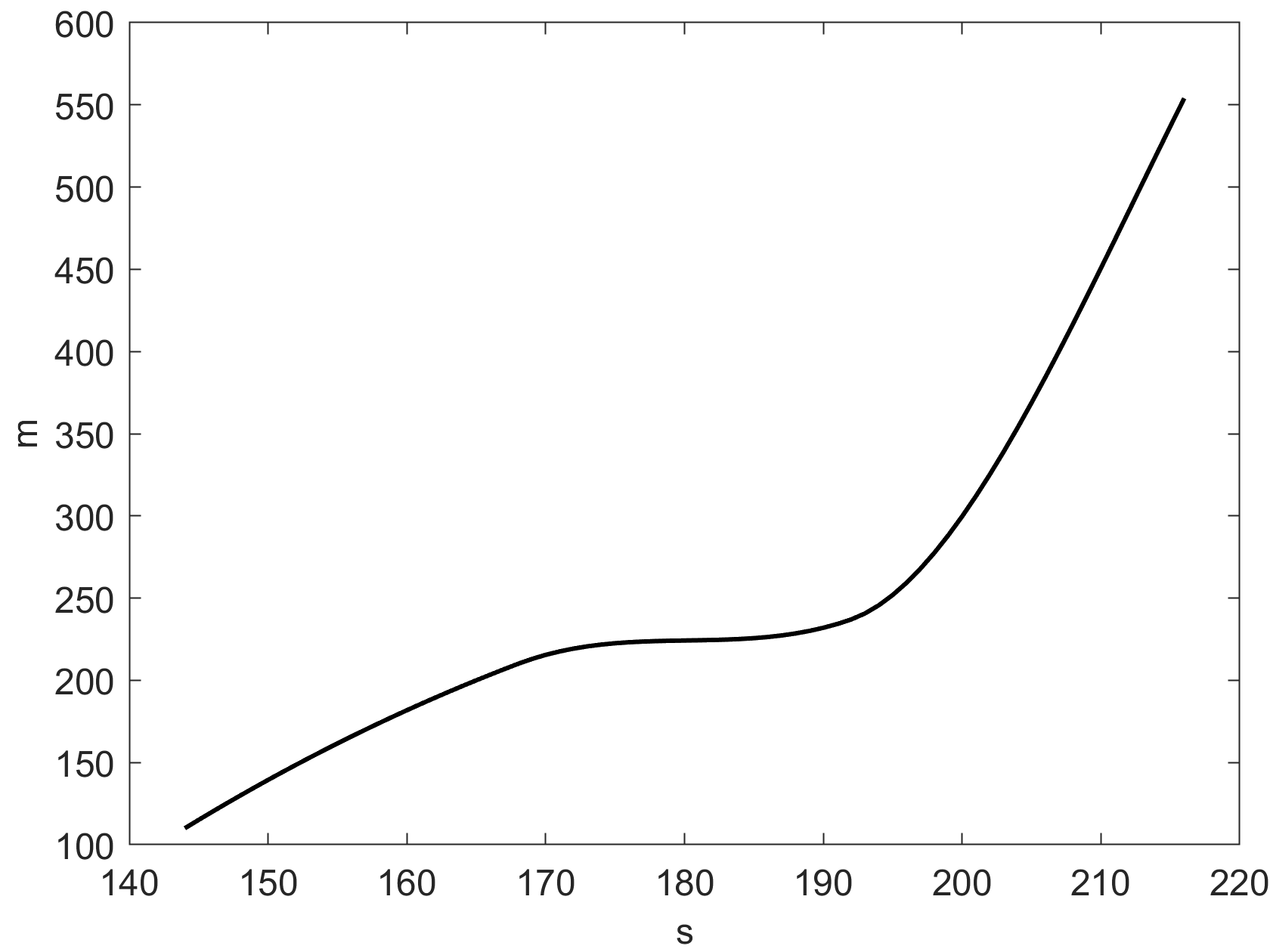}
\caption{Left - Plot of the relative errors in the recovery of four samples $\sigma$ with increasing gradient $s$-sparsity obtained by PGM-TV-$M_{\mathcal{O}}$ with an increasing number of measurements $m$. Right - Plot of the empirical relation between the number of measurements $m$ and the sparsity factor $s$. 
} 
\label{Fig_CS}
\end{figure}

\subsection{Experimental compressed sensing issues}
\label{sec:sec73}

A well-known result in well-posed linear CS asserts that when $\sigma \in \R^n $ is $s$-sparse, the
recovery via $\ell^1$-minimization is provably exact, using at least a number of measurements $m$ is roughly $O(s \log n)$. To the best of our knowledge, a similar result has not been achieved yet in the case of nonlinear measurements.
To conclude this numerical session we would like to investigate, at least from an experimental point of view, the relation between the $s$-sparsity factor, which characterizes the conductivity $\sigma$ to be reconstructed, and the number of nonlinear measurements $m$, needed to obtain a suboptimal recovery. 

The quality of the reconstructed conductivity $\sigma$ is strictly related to the quantity $m$ and the quality of the acquired measurements $V_M$.  In this simplified context we are aware of neglecting many other factors that could affect the reconstruction accuracy, i.e., the mismodeling of the domain, or the misplacement of electrodes. 
However, it is clear that obtaining high-quality EIT reconstructions with a reduced number of electrodes (and thus of measurements) would be of great help to reduce the costs and increase the reliability of EIT systems in practical applications. 
It would be even more useful to know the sufficient number of measurements needed to recover an optimal $\sigma$, under sparsity conditions on the unknown conductivity. 
However, unlike the well-posed CS linear context, we address the compressed sensing recovery problem in an ill-posed setting where the observations are nonlinear. 
 We aim at least to show and analyze the experimental relationship between the number of measurements $m$ needed for a suboptimal recovery and the $s$-sparsity of the unknown conductivity distribution.
At this aim we applied PGM-TV-$M_{\mathcal{O}}$ algorithm to the reconstruction of four conductivity samples with increasing gradient sparsity, using an increased number of measurements.
In the idealized set-up, each sample is noise-free and contains a single circular inclusion, with a different ray; for increasing ray values the sample represents a decreasing $s$-sparsity value, characterized by $s:=||\nabla \sigma||_0  = \{144,168,192,216\}$.

In Fig. \ref{Fig_CS}, left panel, the relative $\ell^2$ reconstruction errors $ rel\_err := \| \sigma-\sigma^* \|_2/ \| \sigma\|_2$ are shown 
for each sample in terms of  the number of measurements $m$ used for solving the inverse EIT reconstruction problem, with $m$ that varies in the range  $m=\{ 16, 64,256,1024\}$.
 
The horizontal solid red line indicates the error threshold below which the reconstruction can be considered suboptimal (with relative error $5\times 10^{-5}$ which corresponds to PSNR=90). As expected, when the gradient-sparsity decreases, which corresponds to increasing values of $s$, the number of measurements necessary to obtain a suboptimal reconstruction increases. 
Finally, it should be emphasized that, for samples with a more severe $s$-sparsity, beyond a certain threshold, the improvement in reconstruction, in the face of an increase in the number of measurements (from $m=256$ to $m=1024$), is no longer significant. 
The plot in Fig.\ref{Fig_CS}, right panel, represents the empirical relation between the number of measurements $m$ and the sparsity factor $s$, obtained by interpolating the points in the plot - Fig. \ref{Fig_CS}(left panel) - which intersect the optimal recovery line (in solid red). The adopted CS strategy allows us to reconstruct the conducibility distributions efficiently, even when the number of measurements is much smaller than the data’s dimension. Thus the plot would help to determine the sufficient number of measurements needed to recover an optimal $\sigma$, under sparsity conditions on the unknown conductivity.

\section{Conclusions}
\label{sec:sec8}
This paper demonstrated a proof of concept study in using CS techniques in the numerical solution of nonlinear ill-posed inverse problems.  We proposed a sparsity-aware PGM for the solution of a variational formulation of the EIT inverse problem.   The sparsity inducing role is taken by a new concept of “Oracle” which infers the optimal support for a given set of nonlinear measurements. By exploiting the sparsity or compressibility of the signal distribution, CS reduces the amount of data needed for accurate reconstruction. The Oracle is designed by an autoencoder GNN that automatically predicts a binary mask which localizes the inclusions thus reducing memory requirements and processing time while maintaining recovery accuracy. The accurate recovery is demonstrated, using the proposed  sparsity-aware PGM algorithm, under the requirements that the Jacobian of the measurement system $\Phi$  satisfies a RIP-like condition and that $\Phi$ is mildly nonlinear.
Moreover, we shed light on the problem of determining how few measurements suffice for an accurate EIT sparsity-regularized reconstruction, a well-known result in well-posed linear CS. An interesting future direction will address this issue. Finally, we will consider other interesting nonlinear CS contexts where the proposed Oracle-based strategy can be successfully applied.

\section*{Data Availability} Data will be made available on request.

\section*{Declarations}
\noindent \textbf{Conflict of interest} The authors declare no conflict of interest.

\section*{Acknowledgments}

This work was supported in part by the National Group for Scientific Computation (GNCS-INDAM),
Research Projects 2024, and in part by MIUR RFO projects.
The research of LR has been funded by PNRR - M4C2 - Investimento 1.3. Partenariato Esteso PE00000013 - ``FAIR - Future Artificial Intelligence Research'' - Spoke 8 ``Pervasive AI'', which is funded by the European Commission under the NextGeneration EU programme. LR also acknowledges the support of ``Gruppo Nazionale per l’Analisi Matematica, la Probabilità e le loro Applicazioni'' of the ``Istituto Nazionale di Alta Matematica'' through project GNAMPA-INdAM 2023, code CUP\_E53C22001930001. The work of SM and DL was supported by PRIN2022\_MORIGI, titled "Inverse Problems in the Imaging Sciences (IPIS)" 2022 ANC8HL - CUP J53D23003670006, and PRIN2022\_PNRR\_CRESCENTINI CUP J53D23014080001. 
\bibliographystyle{abbrv}
\bibliography{refs}

\newpage

\appendix
\section{Appendix}

\noindent{\bf Proof of Theorem \ref{thm:blumensath}} 
We follow the main ideas behind the proof of \cite[Theorem 2]{blumensath2013compressed}, with some modifications due to use of the scheme \eqref{eq:PG-Phi} instead of the Iterative Hard Thresholding, studied therein, and a different treatment of the bounds in \eqref{eq:RIP}, leading to different conditions that must be satisfied by the parameters.
We start by proving the following claim:
\begin{equation}
\| \sigmanp - \sigma \|^2 \leq (1-\mu \lambda \rho - \mu \alpha + \mu \gamma) \| \sigman - \sigma \|^2 + 4 \mu \mathcal{J}_\lambda^\delta(\sigma),
    \label{eq:claim}
\end{equation}
which holds for any $\sigma \in \operatorname{dom}(g)$. 
Indeed, let us first consider the definition of $\sigmanp$, which is given by \eqref{eq:PG-Phi} and can be rewritten as follows (to simplify the expressions, we denote by $J$ the matrix $J_\Phi(\sigman)$)
\[
\sigmanp = \argmin_{\sigma \in \R^\nt} \left\{ 
\frac{1}{2} \| \sigma - \sigman + \mu \lambda \rho \sigman - \mu J^T(\Ld- \Phi(\sigman))\|^2 + \mu \lambda g(\sigma)
\right\};
\]
therefore, due to the convexity of $g$, the minimizer $\sigmanp$ satisfies the following optimality conditions:
\[
- (\sigmanp - \sigman + \mu \lambda \rho \sigman - \mu J^T(\Ld-\Phi(\sigman))) \in \mu \lambda \partial g(\sigmanp),
\]
which translates into 
\[
\mu \lambda  (g(\sigma) - g(\sigmanp)) \geq - \langle \sigmanp - \sigman + \mu \lambda \rho \sigman - \mu J^T(\Ld-\Phi(\sigman)), \sigma - \sigmanp)  \rangle \qquad \forall \sigma \in \operatorname{dom}(g).
\]
Using simple algebraic manipulations, we get
\[
\begin{aligned}
    &\frac{1}{2}\| \sigmanp - \sigma \|^2 + \frac{1}{2} \| \sigmanp - \sigman + \mu \lambda \rho \sigman - \mu J^T(\Ld-\Phi(\sigman)) \| \\
    & \qquad - \frac{1}{2} \| \sigma - \sigman + \mu \lambda \rho \sigman - \mu J^T(\Ld-\Phi(\sigman))\|^2 \leq \mu \lambda g(\sigma) - \mu \lambda g(\sigmanp).
\end{aligned}
\]
Neglecting positive and negative terms on the left and right-hand side, respectively (using the non-negativity of $g$), we obtain
\begin{equation}
 \| \sigmanp - \sigma \|^2 \leq \| \sigma - \sigman + \mu \lambda \rho \sigman - \mu J^T(\Ld-\Phi(\sigman))\|^2 + 2 \mu \lambda g(\sigma)
    \label{eq:aux2}
\end{equation}
Let us now focus on the first term on the right-hand side of \eqref{eq:aux2}:
\[
\begin{aligned}
    \| \sigma - \sigman + \mu \lambda \rho \sigman - \mu J^T(\Ld-\Phi(\sigman))\|^2 = & \| \sigma - \sigman \|^2 + \mu^2 \| \lambda \rho \sigman - J^T(\Ld - \Phi(\sigman)) \|^2 \\ & + 2\mu\lambda\rho \langle \sigman, \sigma - \sigman \rangle - 2\mu \langle J(\sigma - \sigman), \Ld -\Phi(\sigman) \rangle \\
    =& \| \sigma - \sigman \|^2 + \mu^2\ \circled{1}
    + \mu\lambda\rho \ \circled{2}
    + \mu\ \circled{3}
\end{aligned}
\]
We furthermore observe that
\[
\begin{aligned}
\circled{1} &=  \| \lambda \rho \sigman - J^T(\Ld - \Phi(\sigman)) \|^2 \leq  2 \lambda^2 \rho^2 \| \sigman \|^2 + \| J^T(\Ld - \Phi(\sigman)) \|^2 \\
&\leq  2 \lambda^2 \rho^2 \| \sigman \|^2 + 2 \beta \| \Ld - \Phi(\sigman) \|^2,
\end{aligned}
\]
where used the upper bound in \eqref{eq:RIP} since $\sigman \in \operatorname{dom}(g) \subset K_{0,1}$; instead,
\[
    \circled{2} = 
    2 \langle \sigman, \sigma - \sigman \rangle = \| \sigma\|^2 - \| \sigman\|^2 - \| \sigma - \sigman \|^2
\]
and, analogously,
\[
\begin{aligned}
\circled{3} & = - 2 \langle J(\sigma - \sigman), \Ld -\Phi(\sigman) \rangle \\ &= \| \Ld - \Phi(\sigman) - J(\sigma - \sigman)\|^2 - \| J(\sigma - \sigman)\|^2 - \| \Ld - \Phi(\sigman) \|^2 \\
    & \leq \| \Ld - \Phi(\sigman) - J(\sigma - \sigman)\|^2 - \alpha \|\sigma - \sigman\|^2 - \| \Ld - \Phi(\sigman) \|^2,
\end{aligned}
\]
where we also used the lower bound in \eqref{eq:RIP}. 
The term $\Ld - \Phi(\sigman) - J(\sigma - \sigman)$ can be bounded as follows:
\[
\begin{aligned}
\| \Ld - \Phi(\sigman) - J(\sigma - \sigman) \|^2 \leq & 2 \| \Ld -\Phi(\sigma)\|^2 + 2 \| \Phi(\sigma) - \Phi(\sigman) - J(\sigma -\sigman)\|^2 \\
\leq & 2 \| \Ld -\Phi(\sigma)\|^2 + 2\gamma \| \sigma - \sigman \|^2,
\end{aligned}
\]
where we have used \eqref{eq:nonlin_ass} with $\sigma_1 = \sigma$ and $\sigma_2 = \sigman$. 
Collecting all the results in \eqref{eq:aux2}, we get
\[
\begin{aligned}
    \| \sigmanp - \sigma \|^2 \leq & \| \sigma - \sigman \|^2 + 2 \mu^2 \big( \lambda^2 \rho^2 \| \sigman\|^2 + \beta  \| \Ld - \Phi(\sigman) \|^2 \big) \\
    & + \mu\lambda\rho \big( \| \sigma\|^2 - \| \sigman\|^2 - \| \sigma - \sigman \|^2 \big) \\
    & + \mu \big( 2\| \Ld -\Phi(\sigma)\|^2 + 2\gamma \| \sigma - \sigman \|^2 - \alpha \|\sigma - \sigman\|^2 - \| \Ld - \Phi(\sigman) \|^2 \big) \\
    & + 2\mu \lambda g(\sigma) \\
    = & (1-\mu \lambda \rho -\mu \alpha + 2\mu \gamma) \| \sigman  - \sigma \|^2 + (2 \mu^2 \lambda^2 \rho^2 - \mu \lambda \rho) \|\sigman\|^2 \\
    & + (2\mu^2\beta - \mu) \| \Ld - \Phi(\sigman) \|^2 + \mu \lambda \rho  \| \sigma\|^2 + 2 \mu \| \Phi(\sigma) - \Ld\|^2 + 2\mu \lambda g(\sigma).
\end{aligned}
\]
Imposing that $2 \mu^2 \lambda^2 \rho^2 - \mu \lambda \rho \leq 0$ (i.e., $\mu \leq \frac{1}{2\lambda \rho}$) and $2\mu^2\beta -\mu \leq 0$ (i.e., $\mu \leq \frac{1}{2\beta}$), we finally retrieve \eqref{eq:claim}, where we used again the non-negativity of $g$. Let us now define $q = 1-\mu \lambda \rho -\mu \alpha + 2 \mu \gamma$: then, applying \eqref{eq:claim} recursively, we get
\[
\| \sigmanp - \sigma \|^2 \leq q^n \| \sigma^{(0)} - \sigma \|^2 + 4\mu \mathcal{J}_\lambda^\delta(\sigma) \sum_{i=0}^n q^i.
\]
This shows that, if we impose that $q < 1$ (which motivates the last bound in \eqref{eq:parameters}), the sequence $\{\sigman\}$ is bounded and convergent to a cluster point $\overline{\sigma}$ such that
\[
\| \overline{\sigma} - \sigma \|^2 \leq \frac{4\mu}{1-q} \mathcal{J}_\lambda^\delta(\sigma) ,
\]
and substituting the expression of $q$ we recover \eqref{eq:convergence}.

\end{document}